\newtheorem{theorem}{Theorem}[section]
\newtheorem{example}{Example}
\newtheorem{remark}{Remark}
\newtheorem{proposition}{Proposition}[section]
\newtheorem{lemma}{Lemma}[section]
\newtheorem{corollary}{Corollary}[section]
\def\cat{{\rm{cat}\hskip1pt}}
\def\TC{{\rm{TC}\hskip1pt}}
\def\HTC{{\rm{HTC}\hskip1pt}}
\def\MTC{{\rm{MTC}\hskip1pt}}
\def\cl{{\rm{cl}\hskip1pt}}
\def\zcl{{\rm{zcl}\hskip1pt}}
\begin{document}
\title{On the rational topological complexity of coformal elliptic spaces}

\author{Said Hamoun}
\author{Youssef Rami}
\address{My Ismail University of Mekn\`es, Department of Mathematics, B. P. 11 201 Zitoune, Mekn\`es, Morocco.}
\email{s.hamoun@edu.umi.ac.ma}
\email{y.rami@umi.ac.ma}
\author{Lucile Vandembroucq}
\address{Centro de Matem\'atica, Universidade do Minho, Campus de Gualtar, 4710-057 Braga, Portugal.}
\email{lucile@math.uminho.pt}

	\begin{abstract} We establish some upper and lower bounds of the rational topological complexity for certain classes of elliptic spaces. Our techniques permit us in particular to show that the rational topological complexity coincides with the dimension of the rational homotopy for some special families of coformal elliptic spaces. 
	\end{abstract}

\keywords{Rational topological complexity, Elliptic spaces}

\subjclass[2010]{55M30, 55P62}

\maketitle

\section{Introduction}
Let $S$ be a topological space. We recall that the topological complexity $\TC(S)$ of $S$, introduced by Farber \cite{FM}, is defined as the  least integer $m$ such that there exists a family of open subsets $U_0,\cdots , U_m$ covering $S\times S$ and of local continuous sections $s_i: U_i \rightarrow S ^{[0,1]}$ of the map $ev_{0,1}: S^{[0,1]}\rightarrow S\times S$, $\lambda \rightarrow (\lambda  (0),\lambda (1))$. The integer $\TC$ is a homotopy invariant which satisfies $\cat(S) \leq \TC(S) \leq 2 \cat(S)$ where  $\cat(S)$ is the Lusternik–Schnirelmann category of $S$. We refer to \cite{F} and \cite{CLOT} for more information on these invariants.

If $S$ is a simply-connected CW-complex of finite type and if $S_{\!{0}}$ is its rationalization, then lower bounds of $\cat(S)$ and  $\TC(S)$ are given respectively by the rational LS-category $\cat_0(S):=\cat(S_{\!{0}})$ and rational topological complexity $\TC_0(S):=\TC(S_{\!{0}})$. As is known, these invariants can be characterized in terms of a Sullivan model $(\Lambda V,d)$ of $S$, see Sections \ref{sec2} and \ref{sec3} below for more details.  We here recall that, when $S$ is formal (that is, $(\Lambda V,d)\rightarrow (H^*(X; \mathbb{Q}),0)$ is a quasi-isomorphism), we have $\cat_0(S)=\cl_{\mathbb{Q}}(S)$ and $\TC_0(S)=\zcl_{\mathbb{Q}}(S)$ where $\cl_{\mathbb{Q}}(S)$ and $\zcl_{\mathbb{Q}}(S)$ are respectively the cuplength and zero-divisor cuplength of $H^*(S;\mathbb{Q})$. However, in general $\cat_ 0$ and $\TC_0$ can be larger than these cohomological lower bounds.

In this article we study the rational topological complexity of elliptic spaces. Recall that $S$ is elliptic if $\pi_{*}(S) \otimes \mathbb{Q}$ and $ H^*(S; \mathbb{Q})$ are both finite dimensional, see \cite[Ch 6]{FHT} as a general reference. Explicit expressions for the rational LS-category of certain classes of elliptic spaces have been established in \cite{AJ}, \cite{LM02}, \cite{J} and \cite{L}. In the continuity of these works, our general goal is to study the rational topological complexity of an elliptic space in terms of its LS-category. 
Many elliptic spaces, for instance the homogeneous spaces $G/H$, admit a pure minimal Sullivan model $(\Lambda V,d)$, where  pure means $d V^{even}=0$ and $d V^{odd} \subset \Lambda V^{even}$. We will call such a space a pure elliptic space and our work focus on the study of the rational topological complexity of these  spaces.
Considering the formal case, we first note:
\begin{theorem} \label{th1.1}
Let $S$ be an elliptic pure formal space. We have
$$ \TC_0(S)= 2\cat_0(S) +\chi_{\pi}(S)$$
where $\chi_{\pi}(S)= \dim \pi _{even}(S)\otimes \mathbb{Q}- \dim \pi _{odd} (S)\otimes \mathbb{Q}$.
\end{theorem}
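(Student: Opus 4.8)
\textit{Reduction via formality.} Since $S$ is formal, $\cat_0(S)=\cl_{\mathbb{Q}}(S)$ and $\TC_0(S)=\zcl_{\mathbb{Q}}(S)$, so it suffices to prove $\zcl_{\mathbb{Q}}(S)=2\cl_{\mathbb{Q}}(S)+\chi_{\pi}(S)$. Fix a pure minimal model $(\Lambda V,d)=(\mathbb{Q}[x_1,\dots,x_n]\otimes\Lambda(y_1,\dots,y_q),d)$ of $S$, with $dx_i=0$ and $dy_j\in\mathbb{Q}[x_1,\dots,x_n]$; ellipticity forces the ideal $I:=(dy_1,\dots,dy_q)$ to be $(x_1,\dots,x_n)$-primary, so $n\le q$ and $\chi_{\pi}(S)=\dim V^{even}-\dim V^{odd}=n-q\le 0$. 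Recall that $H^*(S;\mathbb{Q})$ satisfies Poincaré duality; in particular the general bound $\zcl_{\mathbb{Q}}(S)\le\cl_{\mathbb{Q}}\bigl(H^*(S;\mathbb{Q})\otimes H^*(S;\mathbb{Q})\bigr)=2\cl_{\mathbb{Q}}(S)$ holds, and the content of the theorem is to sharpen this by the defect $q-n$.

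\textit{A splitting of the rational homotopy type.} The key step is to show that a pure elliptic formal space $S$ has the rational homotopy type of $F\times S^{2k_1+1}\times\cdots\times S^{2k_s+1}$, where $s=q-n=-\chi_{\pi}(S)$ and $F$ is an $F_0$-space (elliptic with $H^{odd}(F;\mathbb{Q})=0$). To this end I would first argue that formality forces $I$ to be a complete intersection ideal: a non-Koszul syzygy among $dy_1,\dots,dy_q$ would produce a nontrivial Massey product in $H^*(\Lambda V,d)$. Then $I$ admits a homogeneous minimal generating set of $n$ elements, and — being a minimal generating set of a complete intersection ideal — it is a regular sequence; after a $\mathbb{Q}$-linear recombination of the $y_j$ within each degree we may assume $dy_1,\dots,dy_n$ is this regular sequence. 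The remaining relations then lie in $I$, say $dy_j=\sum_{i\le n}a_{ji}\,dy_i$ with $a_{ji}\in\mathbb{Q}[x_1,\dots,x_n]$ of even degree $|y_j|-|y_i|$, and replacing $y_j$ by $\tilde y_j:=y_j-\sum_{i\le n}a_{ji}y_i$ for $j>n$ yields $d\tilde y_j=0$, this change being a composite of a linear change of basis and a perturbation by decomposables. One obtains the model $\bigl(\mathbb{Q}[x_1,\dots,x_n]\otimes\Lambda(y_1,\dots,y_n),d\bigr)\otimes(\Lambda(\tilde y_{n+1},\dots,\tilde y_q),0)$; the first factor is an $F_0$-model (and is formal, mapping quasi-isomorphically onto its cohomology $\mathbb{Q}[x]/(dy_1,\dots,dy_n)$), which gives the splitting.

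\textit{Conclusion.} Both $\cl_{\mathbb{Q}}$ and $\zcl_{\mathbb{Q}}$ are additive for products: super-additivity is elementary, sub-additivity of $\cl_{\mathbb{Q}}$ is clear, and sub-additivity of $\zcl_{\mathbb{Q}}$ follows from the identity $\ker(\mu_A\otimes\mu_B)=\ker\mu_A\otimes(B\otimes B)+(A\otimes A)\otimes\ker\mu_B$ for the zero-divisor ideal of $A\otimes B$. Together with $\cl_{\mathbb{Q}}(S^{2k+1})=\zcl_{\mathbb{Q}}(S^{2k+1})=1$ the splitting gives $\cl_{\mathbb{Q}}(S)=\cl_{\mathbb{Q}}(F)+s$ and $\zcl_{\mathbb{Q}}(S)=\zcl_{\mathbb{Q}}(F)+s$, so it remains to prove $\zcl_{\mathbb{Q}}(F)=2\cl_{\mathbb{Q}}(F)$. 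Now $A:=H^*(F;\mathbb{Q})$ is a graded Artinian complete intersection concentrated in even degrees, hence Gorenstein; writing $c:=\cl_{\mathbb{Q}}(F)$ and $\mathfrak{m}:=A^{>0}$ one has $\mathfrak{m}^c\ne 0=\mathfrak{m}^{c+1}$, so $\mathfrak{m}^c\subseteq\mathrm{soc}(A)=A^{top}$, a line; choose $a_1,\dots,a_c\in\mathfrak{m}$ with $\omega:=a_1\cdots a_c$ a socle generator. Then $\beta:=\prod_{i=1}^{c}(a_i\otimes 1-1\otimes a_i)\in\ker(A\otimes A\to A)^c$, and since $\mathfrak{m}\cdot\omega=\mathfrak{m}^{c+1}=0$ an expansion yields $\beta\cdot(1\otimes\omega)=\omega\otimes\omega$; a parallel bookkeeping of $\beta^2=\prod_i(a_i^2\otimes 1-2a_i\otimes a_i+1\otimes a_i^2)$ shows its component on the socle generator $\omega\otimes\omega$ of $A\otimes A$ is a nonzero scalar (the ``all-middle'' choice contributes $(-2)^c\,\omega\otimes\omega$, which cannot be cancelled for degree reasons). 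Hence $\ker(A\otimes A\to A)^{2c}\ne 0$, i.e. $\zcl_{\mathbb{Q}}(F)\ge 2c$, and with the general bound $\le 2c$ we get equality. Therefore $\zcl_{\mathbb{Q}}(S)=\zcl_{\mathbb{Q}}(F)+s=2\cl_{\mathbb{Q}}(F)+s=2\bigl(\cl_{\mathbb{Q}}(S)-s\bigr)+s=2\cl_{\mathbb{Q}}(S)+\chi_{\pi}(S)$.

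\textit{Main obstacle.} The crux is the splitting step, and precisely the claim that formality of a pure elliptic model forces the relation ideal to be a complete intersection (equivalently, that the positive-word-length Koszul homology splits off as an exterior algebra); I expect this to require a careful Massey-product computation, or the relevant statement from the cited work on pure/formal elliptic spaces. The equality $\zcl_{\mathbb{Q}}(F)=2\cl_{\mathbb{Q}}(F)$ for formal $F_0$-spaces and the additivity facts are comparatively routine; moreover the lower bound $\TC_0(S)\ge 2\cat_0(S)+\chi_{\pi}(S)$ may well hold for all pure elliptic spaces by a direct argument on the pure model, in which case only the splitting and the trivial bound $\zcl_{\mathbb{Q}}(F)\le 2\cl_{\mathbb{Q}}(F)$ are needed for the upper bound.
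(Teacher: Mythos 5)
Your overall architecture matches the paper's proof exactly: reduce via formality to $\zcl_{\mathbb{Q}}=2\cl_{\mathbb{Q}}+\chi_\pi$, split the pure formal model as (an $F_0$-model) $\otimes$ (a free exterior algebra on odd generators), use additivity over the product, and invoke $\zcl_{\mathbb{Q}}=2\cl_{\mathbb{Q}}$ for the $F_0$-factor. The difference is that where the paper discharges each ingredient by citation — the splitting via Amann's \cite[Lemma 1.5]{AM}, the additivity over the odd-sphere factors via \cite{JMP}/\cite[Th.~12]{CJC} (for $\MTC$ rather than $\zcl$), and $\zcl_{\mathbb{Q}}(F_0)=2\cl_{\mathbb{Q}}(F_0)$ via \cite[Cor.~30]{CJV} — you attempt to reconstruct them from scratch.

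Two of your reconstructions have genuine gaps. First, the splitting: your assertion that formality of a pure elliptic model forces the relation ideal $(dy_1,\dots,dy_q)\subset\mathbb{Q}[x_1,\dots,x_n]$ to be a complete intersection is the whole content of the lemma you need, and the ``a non-Koszul syzygy would produce a nontrivial Massey product'' sketch is not an argument — you would have to show the Massey product is uniformly defined (no indeterminacy swallowing it) and nonvanishing in $H^*(\Lambda V,d)$. You acknowledge this and correctly identify it as the crux; it is exactly what Amann's lemma supplies. Second, in the $\zcl_{\mathbb{Q}}(F)=2\cl_{\mathbb{Q}}(F)$ step, the claim that the $(-2)^c\,\omega\otimes\omega$ term from the ``all-middle'' choice ``cannot be cancelled for degree reasons'' is false as stated: other choices $(S,T,M)$ in the expansion of $\prod_i(a_i^2\otimes 1-2a_i\otimes a_i+1\otimes a_i^2)$ do land in bidegree $(|\omega|,|\omega|)$ whenever $\sum_{i\in S}|a_i|=\sum_{i\in T}|a_i|$ (e.g. for $A=\mathbb{Q}[x]/(x^3)$ with $a_1=a_2=x$ one finds $\beta^2=6\,x^2\otimes x^2$, not $4\,x^2\otimes x^2$), so ruling out cancellation requires an actual sign/Poincaré-duality argument rather than a degree count. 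Both points are handled by the references in the paper's proof; as a self-contained proof your writeup is incomplete precisely there.

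A small further point: you prove additivity of $\zcl_{\mathbb{Q}}$ under tensor products directly, whereas the paper instead uses additivity of $\MTC$ over odd-sphere factors (and then $\MTC=\zcl_{\mathbb{Q}}$ by formality). Your route is fine — your sub-additivity sketch for $\zcl_{\mathbb{Q}}$ via the decomposition of $\ker\mu_{A\otimes B}$ does go through — but it is a mildly different bookkeeping than the paper's.
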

Note that the ellipticity of S implies that $\chi_{\pi}(S) \leq 0$. If $\chi _{\pi}(S)=0$ then $S$ is formal and is called an $F_0$-space. In order to study the non-formal case we next establish  the following theorem:
\begin{theorem} \label{th1.2}
Let $S$ be a pure elliptic space and let $(\Lambda V,d)$ be its minimal Sullivan model. If there exists an extension $(\Lambda Z, d) \hookrightarrow (\Lambda  V,d) $ where $Z^{even}=V^{even}$ and $(\Lambda  Z,d)$ is the model of an $F_0$-space  $R$, then $$ \TC_0(S)\leq 2\cat_0(R) -\chi_{\pi}(S).$$	
If moreover there exists $k$ such that $d V \subset \Lambda ^k V$, then
$$ \TC_0(S)\leq 2\cat_0(S
) +\chi_{\pi}(S).$$	 \end{theorem}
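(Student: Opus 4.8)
The plan is to deduce the second inequality from the first one, together with the estimate
$$\cat_0(S)\ \ge\ \cat_0(R)-\chi_\pi(S).$$
Granting this, $2\cat_0(R)-\chi_\pi(S)=\bigl(2\cat_0(R)-2\chi_\pi(S)\bigr)+\chi_\pi(S)\le 2\cat_0(S)+\chi_\pi(S)$, so the first part of Theorem~\ref{th1.2} gives $\TC_0(S)\le 2\cat_0(R)-\chi_\pi(S)\le 2\cat_0(S)+\chi_\pi(S)$. Thus everything reduces to the displayed inequality, which is where the hypothesis $dV\subset\Lambda^kV$ is used (we may assume $d\neq 0$, hence $k\ge 2$, the case $d=0$ being covered by Theorem~\ref{th1.1}).

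Write $V^{even}=\langle x_1,\dots ,x_q\rangle$, pick a basis $y_1,\dots ,y_q$ of $Z^{odd}$, extend it to a basis $y_1,\dots ,y_q,w_1,\dots ,w_m$ of $V^{odd}$, where $m=\dim V^{odd}-\dim V^{even}=-\chi_\pi(S)\ge 0$, and set $W=\langle w_1,\dots ,w_m\rangle$, so $(\Lambda V,d)=(\Lambda Z\otimes\Lambda W,d)$ with $dW\subset\Lambda V^{even}=\Lambda Z$. By hypothesis every $dy_i$ and every $dw_j$ lies in $\Lambda^k(V^{even})\subset\mathbb{Q}[x_1,\dots ,x_q]$ and is word-length homogeneous of length $k$; hence $d$ raises word-length on $\Lambda V$ by exactly $k-1>0$, so the word-length grading descends to $H^*(\Lambda V,d)$, every cohomology class is word-length homogeneous, and a nonzero class of word-length $\ell$ has no representative of word-length $<\ell$ and so is killed by the projection $(\Lambda V,d)\to(\Lambda V/\Lambda^{>\ell-1}V,\bar d)$. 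Consequently $\cat_0(S)\ge e_0(\Lambda V,d)\ge$ (word-length of the fundamental class $\omega_S$), and it remains to show this word-length equals $\cat_0(R)+m$.

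Because $(\Lambda Z,d)$ models an $F_0$-space, $(f_1,\dots ,f_q):=(dy_1,\dots ,dy_q)$ is a regular sequence, so $A:=H^*(\Lambda Z,d)=\mathbb{Q}[x_1,\dots ,x_q]/(f_1,\dots ,f_q)$ is an Artinian Gorenstein algebra which, being word-length homogeneous, has its one-dimensional socle in top word-length; thus $\cat_0(R)=\cl_{\mathbb{Q}}(R)$ is the socle degree of $A$ and the socle generator $\omega_R$ is word-length homogeneous of word-length $\cat_0(R)$. Regularity also yields a surjective quasi-isomorphism $\phi\colon(\Lambda V,d)=(\mathbb{Q}[x]\otimes\Lambda V^{odd},d)\to(A\otimes\Lambda W,\bar d)$, $\bar d w_j=[dw_j]$, which kills $y_1,\dots ,y_q$ and hence preserves word-length. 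In $A\otimes\Lambda W$ the top cohomological degree $\mathrm{fd}(R)+\sum_j\deg w_j\,(=\mathrm{fd}(S))$ is attained only by $\omega_R\otimes w_1\cdots w_m$, a $\bar d$-cocycle (the socle is annihilated by $A^+$, and each $[dw_j]\in A^+$) that is no $\bar d$-boundary (it lies in top $\Lambda W$-degree); so it represents the fundamental class and corresponds to $\omega_S$ under $\phi$. Lifting $\omega_R$ to a polynomial $P$ of word-length $\cat_0(R)$, the element $P\cdot w_1\cdots w_m$ has word-length $\cat_0(R)+m$ and $\phi$-image $\omega_R\otimes w_1\cdots w_m$; subtracting from it a word-length homogeneous element of $\ker\phi$ of word-length $\cat_0(R)+m$ that cancels its differential — possible because $\ker\phi$ is an acyclic, word-length graded subcomplex — produces a cocycle $\Omega$, word-length homogeneous of word-length $\cat_0(R)+m$, with $[\Omega]=\omega_S$. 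Therefore $\cat_0(S)\ge\cat_0(R)+m=\cat_0(R)-\chi_\pi(S)$.

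The point needing care, and where the hypothesis $dV\subset\Lambda^kV$ is genuinely used, is this last step: that the cocycle $\Omega$ representing $\omega_S$ stays word-length homogeneous of word-length $\cat_0(R)+m$. Concretely, the primitive cancelling $d(P\cdot w_1\cdots w_m)$ is constructed by repeatedly expressing products $P\cdot dw_j$, then $h\cdot dw_\ell$, inside the ideal $(f_1,\dots ,f_q)$, and the uniformity of the word-length shift of $d$ forces every polynomial coefficient occurring to have word-length $\cat_0(R)$; equivalently one is proving $e_0(\Lambda Z\otimes\Lambda W,d)\ge e_0(\Lambda Z,d)+\dim W$ for these homogeneous extensions, an inequality that fails in general without homogeneity. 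The remaining ingredients — the reduction to the first part of Theorem~\ref{th1.2}, the bound $e_0\le\cat_0$, and the identification of $\cat_0(R)$ with the socle degree of $H^*(R)$ — are standard.
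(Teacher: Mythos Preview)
Your argument is correct, but it takes a different route from the paper's proof. The paper handles the second inequality by invoking the Lechuga--Murillo formula \cite{LM02}: for an elliptic minimal model with $d=d_k$ one has $\cat(\Lambda V)=n(k-2)+m$ where $n=\dim V^{even}$ and $m=\dim V^{odd}$, and similarly $\cat(\Lambda Z)=n(k-1)$; a two-line computation then shows $2\cat(\Lambda Z)+\dim U=2\cat(\Lambda V)+\chi_\pi(\Lambda V)$, so the first inequality immediately yields the second (in fact with equality $\cat_0(S)=\cat_0(R)-\chi_\pi(S)$, not just $\geq$). By contrast, you bypass the Lechuga--Murillo formula and prove the needed lower bound $\cat_0(S)\geq\cat_0(R)-\chi_\pi(S)$ directly, by exploiting the word-length homogeneity of $d$ to build a cocycle of word-length $\cat_0(R)+m$ representing the fundamental class via the surjective quasi-isomorphism $\phi\colon\Lambda V\to A\otimes\Lambda W$.

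Your route is more self-contained --- it does not cite \cite{LM02} --- and the explicit description of the top cocycle is in the spirit of the constructions used later in the paper. The paper's route is considerably shorter and, as a bonus, yields the \emph{exact} value $\cat_0(S)=\cat_0(R)-\chi_\pi(S)$ rather than only the inequality you need. Note that your argument is in effect reproving the lower bound half of the Lechuga--Murillo formula in this particular setting: the acyclicity and word-length grading of $\ker\phi$, together with the homogeneity of $d$, are exactly what forces the primitive correction to stay in the right word-length, and this is the substantive content behind that formula.
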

When $S$ is elliptic and coformal (that is $d V \subset \Lambda ^2 V$) we have \cite{ref0}
\[\cat_0(S)= \dim \pi _ {odd}(S)\otimes \mathbb{Q}\]  and, under the hypothesis of the Theorem \ref{th1.2}, we obtain $$ \TC_0(S) \leq \dim \pi _{*}(S)\otimes \mathbb{Q}.$$
The second part of the article is dedicated to the study of the rational topological complexity of (pure) elliptic coformal spaces. In particular, the following theorem follows from Theorem \ref{th5.1} in Section \ref{sec}. The invariant $L_0$ which appears in the statement is a certain cuplength defined in the same section.
\begin{theorem} \label{th1.3}
	Let $S$ be a pure elliptic coformal space. Then $$\cat_0(S) + L_0(S) \leq \TC_0(S). $$
\end{theorem}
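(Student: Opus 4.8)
\section{Proof proposal for Theorem \ref{th1.3}}

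The plan is to deduce the inequality from a Sullivan-model lower bound for $\TC_0$, obtained by exhibiting a sufficiently long ``mixed'' product in a relative model of the evaluation fibration. Let $(\Lambda V,d)$ be the minimal model of $S$; purity and coformality give $dV^{even}=0$ and $dV^{odd}\subset\Lambda^2V^{even}$, and by \cite{ref0} one has $\cat_0(S)=\dim V^{odd}$. First I would set up the usual relative Sullivan model of $ev_{0,1}\colon S^{[0,1]}\to S\times S$ (equivalently of the diagonal), an extension $(\Lambda V\otimes\Lambda V,d\otimes d)\hookrightarrow(\Lambda V\otimes\Lambda V\otimes\Lambda\bar V,D)\xrightarrow{\ \simeq\ }(\Lambda V,d)$, with $\bar V$ a copy of $V$ shifted down by one, $D\bar v=1\otimes v-v\otimes1+\sigma(v)$, and quasi-isomorphism $\rho$ given by $\rho(v\otimes1)=\rho(1\otimes v)=v$, $\rho(\bar v)=0$. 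Because $d$ is quadratic, the correction $\sigma(v)$ is itself quadratic, lies in $\bar V\cdot(\Lambda V\otimes\Lambda V)$, and does not raise the $\bar V$-word-length; this control is what makes the coformal case tractable. I would then use $\zcl_{\mathbb Q}(S)\le\MTC_0(S)\le\TC_0(S)$ together with the fact that $\MTC_0(S)$ is detected on this relative model by a nilpotency-type (``$\TC$-Toomer'') invariant, so that it suffices to produce a $D$-cocycle whose class is non-trivial at weight $\cat_0(S)+L_0(S)$.

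The core is a concatenation. For the \emph{category part}, I would use that for a pure coformal elliptic space the fundamental Poincar\'e-duality class of $H^*(\Lambda V,d)$ is, up to coboundary, represented in $\Lambda V$ by an element $\Omega$ genuinely involving all $p=\dim V^{odd}$ odd generators --- the incarnation of $\cat_0(S)=e_0(S)=p$ and of the Koszul structure $dV^{odd}\subset\mathbb Q[V^{even}]$. Transporting the associated zero-divisor $1\otimes\Omega-\Omega\otimes1$ through the extension produces an element $\Theta$, a linear combination of monomials of $\bar V$-weight $\ge1$, which in the module-theoretic sense relevant to $\MTC_0$ carries weight at least $p$, with leading term $\pm\,\bar y_1\cdots\bar y_p\,P(V^{even})$. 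For the \emph{cuplength part}, recall that $L_0(S)$ is the cuplength defined in Section \ref{sec}: it is realized by a non-vanishing product of $L_0(S)$ zero-divisors of ``even type'', each of which --- since $dx_i=0$ --- lifts to an honest $D$-cocycle $\bar x_i:=1\otimes x_i-x_i\otimes1$ with no correction term, hence multiplying cleanly. The candidate class is $\Xi:=\Theta\cdot\bar x_{i_1}\cdots\bar x_{i_{L_0}}$, a $D$-cocycle of weight at least $\cat_0(S)+L_0(S)$.

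The main obstacle, as always for such lower bounds, is to show $[\Xi]$ does not vanish at this weight, i.e.\ that the category class and the cuplength class do not annihilate one another through the differential. To handle it I would filter the relative model by $V^{odd}$-length (equivalently $\bar V^{odd}$-length): purity makes this filtration $D$-stable, since $dV^{odd}\subset\Lambda^2V^{even}$ lowers $V^{odd}$-length by one, and on the associated graded the differential becomes ``purely Koszul''. On the top filtration quotient --- a Koszul complex over $\mathbb Q[V^{even}]\otimes\mathbb Q[V^{even}]$ --- the class of $\Xi$ is pinned down by the fundamental class of the even part together with the chosen product of the $\bar x_i$, a Poincar\'e-duality computation; a standard comparison of the associated spectral sequences then lifts this, showing $[\Xi]\neq0$ at weight $\cat_0(S)+L_0(S)$, whence $\TC_0(S)\ge\cat_0(S)+L_0(S)$. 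Coformality enters again here, guaranteeing that the corrections $\sigma(v)$ neither raise $\bar V$-weight nor perturb the top graded piece, so they cannot destroy the leading term; alternatively the same concatenation can be carried out in a Ganea-type model for $\mathrm{secat}$, yielding the bound for $\TC_0$ directly. As a consistency check, when $S$ is in addition formal (an $F_0$-space) one has $L_0(S)=\cat_0(S)$ and the bound specializes to $\TC_0(S)\ge2\cat_0(S)$, in agreement with Theorem \ref{th1.1}.
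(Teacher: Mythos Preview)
Your proposal has a genuine gap rooted in a misreading of the invariant $L_0(S)$. In the paper, $L_0(S)=\sup_{\mathcal B}L(\Lambda V,\mathcal B)$ is \emph{not} a zero-divisor cuplength computed in $H^*(\Lambda V)$ or in the relative model of the diagonal; it is a cuplength of classes in $H_{odd,*}(A)$, where $A=\frac{\Lambda(x_i)}{(x_i^2)}\otimes\Lambda Y$ is the quotient of the auxiliary elliptic extension $\Lambda W_{\mathcal B}=\Lambda(X\oplus Y\oplus U)$ obtained by adjoining new odd generators $u_i$ with $du_i=x_i^2$. The subscript ``odd'' refers to odd word-length in $X$, and the classes realising $L_0$ need not be generators $x_i$ at all --- in Example~\ref{example1} one of them is $[x_2y_3]$. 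Your ``cuplength part'', which simply multiplies by factors $1\otimes x_i-x_i\otimes1$ inside $\Lambda V\otimes\Lambda V\otimes\Lambda\bar V$, therefore does not represent $L_0(S)$, and nothing in your sketch connects your candidate class $\Xi$ to the actual definition.

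The paper's argument is also structurally different from what you outline. It never works in a relative model of the diagonal of $\Lambda V$; instead it passes to the larger algebra $\Lambda W_{\mathcal B}$ and constructs, via the Lechuga--Murillo procedure, an explicit cocycle $\Omega\in(\ker\mu_{\Lambda W})^{m+n}$ (zero-divisor length $m+n$, not $m$). This $\Omega$ is multiplied by the zero-divisors $(\alpha_k-\alpha'_k)$ lifted from the classes $z_k\in H_{odd,*}(A)$ and by a Poincar\'e-dual factor $\hat\alpha$, and nonvanishing is checked by an explicit computation under the quasi-isomorphism $\varphi\otimes\varphi$ to $A\otimes A'$ (Lemmas~\ref{lemma5.2} and \ref{lemma02}). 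This yields $m+n+r\le\HTC(\Lambda W_{\mathcal B})\le\TC(\Lambda W_{\mathcal B})$, and the bound is then transferred back to $\Lambda V$ via Corollary~\ref{corollary3.1}, which gives $\TC(\Lambda W_{\mathcal B})\le\TC(\Lambda V)+n$; the two $n$'s cancel to produce $m+r\le\TC(\Lambda V)$. Since your proposal bypasses $\Lambda W_{\mathcal B}$ entirely, it cannot engage with the definition of $L_0$, and the filtration/spectral-sequence paragraph is too vague to substitute for the concrete Poincar\'e-duality calculation in $A\otimes A'$ that the paper actually carries out.
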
 
This permits us, for instance, to see that the homogeneous space $S= \frac{SU(6)}{SU(3) \times SU(3)}$ satisfies $\TC_0(S)= \dim \pi _*(S) \otimes \mathbb{Q}=2\cat_0(S)+\chi_{\pi}(S)=5$.

Finally we establish the equality $\TC_0(S)= \dim \pi_* (S) \otimes \mathbb{Q}$ ($=2\cat_0(S)+\chi_{\pi}(S)$) for some special families of coformal spaces which include spaces for which Theorem \ref{th1.3} might not be sufficient to reach the equality.

\section{Preliminaries} \label{sec2}

Along this paper \textit{space} means a simply-connected CW-complex of finite type and we work over $\mathbb{Q}$. We refer to \cite{FHT} and \cite{FOT} as general references on Rational Homotopy Theory. A $cdga$ model of a space $S$ is a commutative (cochain) differential graded algebra $(A,d)$ with a chain  of quasi-isomorphisms $(A,d) \xleftarrow{\simeq} \cdots \xrightarrow{\simeq} A_{PL}(S)$ where $A_{PL}$ is the Sullivan functor of polynomial forms. Quasi-isomorphism here means a cdga morphism which induces an isomorphism in cohomology. In particular $H^*(A,d)= H^*(S;\mathbb{Q})$. If $(H^*(S;\mathbb{Q}),0)$ is a cdga model of $S$ then $S$ is said formal.

A Sullivan model of $S$ is a cdga model of the form $(\Lambda V,d)$ where $\Lambda V$ is free as a commutative graded algebra and there exists a decomposition  $V=\bigoplus _{k\geq 0} V(k)$ such that $d V(0)=0$ and $d(V(i) )\subset \bigoplus _{k\leq i-1} V(k)$. If moreover $d V\subset \Lambda ^{\geq 2} V$, the model is said minimal. In this case the graded vector space $V$ is isomorphic to the dual of the rational homotopy vector space $\pi_{*}(S)\otimes \mathbb{Q}$ of $S$.

A KS-extension of $(\Lambda V,d)$ is a cdga $(\Lambda V \otimes \Lambda Z,d)$ together with a cdga inclusion    $(\Lambda V,d) \hookrightarrow (\Lambda V \otimes \Lambda Z,d)$
where $Z= \bigoplus _{k\geq 0} Z(k)$ and $d (Z(k)) \subset \Lambda V \otimes ( Z(0) \oplus \cdots \oplus Z(k-1))$.
Such a KS-extension models a fibration $p :T \rightarrow S$ over S such that the quotient algebra $$ \left(\frac{\Lambda V \otimes \Lambda Z}{\Lambda ^+ V \otimes \Lambda Z}, \bar{d}\right) \cong (\Lambda Z,\bar{d})$$
is a model for the fiber of $p$.

Any cdga morphism $(\Lambda V,d) \rightarrow (A,d)$ admits a relative Sullivan model, that is a decomposition of the form \[
\xymatrix{
	(\Lambda V , d ) \ar@{_(->} _i[dr] \ar[rr] && ( A,d)\\
	& (\Lambda V \otimes \Lambda Z,D) \ar[ru] ^{ \xi} _- { \simeq}
} 
\] 
where $i$ is  a KS-extension and $\xi$ is a quasi-isomorphism.

Finally we recall the following lifting lemma:
		\begin{lemma}[Lifting lemma]
		Let consider the solid commutative diagram 
		\begin{equation*}
			\xymatrix{
				\Lambda Z \ar[r] \ar@{_(->}[d]_i & \Lambda V \ar@{>>}[d] ^{\simeq}_-\psi{}\\
				\Lambda Z  \otimes \Lambda X \ar@{-->}[ur]^{\exists r} \ar[r] & \Lambda W 
			}
		\end{equation*}
	in which $i$ is a KS-extension and $\psi$ is a surjective quasi-isomorphism. Then there is a (cdga) morphism $r: \Lambda Z\otimes \Lambda X\rightarrow \Lambda V$ which makes the two triangles commutative.
	\end{lemma}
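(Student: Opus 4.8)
The plan is to construct the lift $r$ by recursion along the KS-filtration $X=\bigoplus_{k\ge 0}X(k)$ of the extension $i\colon\Lambda Z\hookrightarrow\Lambda Z\otimes\Lambda X$, the only nontrivial ingredient being that the kernel of a surjective quasi-isomorphism of complexes is acyclic. Write $f\colon\Lambda Z\to\Lambda V$ for the top horizontal map and $g\colon\Lambda Z\otimes\Lambda X\to\Lambda W$ for the bottom one, so that the given square reads $\psi f=g i$. Since $\psi$ is onto, the short exact sequence $0\to\ker\psi\to\Lambda V\xrightarrow{\ \psi\ }\Lambda W\to 0$ induces a long exact sequence in cohomology; as $\psi$ is a quasi-isomorphism, this forces $H^*(\ker\psi)=0$, i.e. $(\ker\psi,d)$ is acyclic. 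This is the fact I would isolate first.

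Next I would define $r$ on $\Lambda\big(Z\oplus X(0)\oplus\cdots\oplus X(k)\big)$ by induction on $k$, setting $r|_{\Lambda Z}:=f$ to begin. Suppose $r$ is already a cdga morphism on $\Lambda\big(Z\oplus X(0)\oplus\cdots\oplus X(k-1)\big)$ with $r i=f$ and $\psi r=g$ there. Choose a homogeneous basis $\{x_\alpha\}$ of $X(k)$. By the definition of a KS-extension, $d x_\alpha$ lies in $\Lambda\big(Z\oplus X(0)\oplus\cdots\oplus X(k-1)\big)$, so $r(d x_\alpha)$ is already defined; it is a cocycle (because $r$ commutes with $d$ on that subalgebra) and $\psi\big(r(d x_\alpha)\big)=g(d x_\alpha)=d\big(g(x_\alpha)\big)$. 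Using surjectivity of $\psi$, pick $a_\alpha\in\Lambda V$ with $\psi(a_\alpha)=g(x_\alpha)$. Then $r(d x_\alpha)-d a_\alpha$ is a cocycle lying in $\ker\psi$, hence equals $d b_\alpha$ for some $b_\alpha\in\ker\psi$ by acyclicity. Putting $r(x_\alpha):=a_\alpha+b_\alpha$ gives $d\,r(x_\alpha)=r(d x_\alpha)$ and $\psi\big(r(x_\alpha)\big)=g(x_\alpha)$. No ordering of $\{x_\alpha\}$ is needed, since $d x_\alpha$ involves no generator of $X(k)$. Extending $r$ multiplicatively and invoking the Leibniz rule shows it is a cdga morphism on $\Lambda\big(Z\oplus\cdots\oplus X(k)\big)$ still satisfying the two triangle relations, and the colimit over $k$ produces the desired $r\colon\Lambda Z\otimes\Lambda X\to\Lambda V$.

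I do not anticipate a real obstacle: this is the classical statement that relative Sullivan algebras have the left lifting property with respect to surjective quasi-isomorphisms, and the argument above is its standard proof. The points deserving attention are the acyclicity of $(\ker\psi,d)$, singled out in the first paragraph, and the organization of the recursion — if $V$, $X$, or the filtration index were larger than the countable/finite type implicit in our setting one would replace the induction by a transfinite recursion (or a Zorn's lemma argument). A last routine check is that the successive partial definitions are mutually compatible, so that $r$ is well defined on all of $\Lambda Z\otimes\Lambda X$ and is genuinely a morphism of cdga's.
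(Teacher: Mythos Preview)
Your argument is correct and is the standard proof of this classical fact. Note however that the paper does not actually prove this lemma: it is stated without proof as a recalled preliminary (``Finally we recall the following lifting lemma''), so there is no proof in the paper to compare against. Your write-up supplies exactly the argument one finds in the standard references (e.g.\ \cite[Lemma~12.4]{FHT}).
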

	\section{Rational topological complexity of certain extensions} \label{sec3}
	Let $S$ be a simply-connected CW-complex of finite type. Recall that  the rational LS-category and topological complexity of $S$ are defined by
	\begin{center}
$\cat_0(S):=\cat(S_{\!{0}})$ and $\TC_0(S)=\TC(S_{\!{0}})$, 
	\end{center} 
where $S_{\!{0}}$ is the rationalization of $S$, and satisfy
\begin{center}
	$\cat_0(S)\leq\cat(S)$ and $\TC_0(S)\leq\TC(S)$. 
\end{center}

Suppose that $(\Lambda V , d)$ is a Sullivan model of $S$. Through the well-known Félix-Halperin characterization of $\cat_0$ from \cite{ref0} we have $\cat_0(S)=\cat(\Lambda V,d)$ where $\cat(\Lambda V,d)$ (or simply $\cat(\Lambda V)$) is the least integer $m$ such that the projection $$(\Lambda V,d) \rightarrow (\frac{\Lambda V}{\Lambda ^{>m} V},\bar{d})$$ admits a homotopy retraction. In \cite{ref5}, solving a conjecture posed by Jessup, Murillo and Parent \cite{JMP}, Carrasquel established a characterization of $\TC_0$ which is in the same spirit as Félix-Halperin's characterization of $\cat_0$. More precisely, let $\ker \mu _{\Lambda V}$ be the kernel of the multiplication $\mu _{\Lambda V}: \Lambda V \otimes \Lambda V \rightarrow \Lambda V $ of the algebra $(\Lambda V,d)$ and denote by $\rho _m$ the canonical projection $$ (\Lambda V \otimes \Lambda V  ,d)\rightarrow \left(\frac{\Lambda V \otimes \Lambda V}{(\ker \mu _{\Lambda V})^{m+1}}, \bar{d}\right).$$
	By denoting by $\TC(\Lambda V,d)$, or simply $\TC(\Lambda V)$, the least integer $m$ for which $\rho_m$ admits a homotopy retraction we have:
	\begin{theorem}{\cite[Theorem 8]{ref5}} $\TC(\Lambda V)=\TC_0(S).$
		\end{theorem}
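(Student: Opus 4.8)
The plan is to adapt to the diagonal fibration the strategy of Félix and Halperin that yielded the characterization $\cat_0(S)=\cat(\Lambda V)$ recalled above. First I would reduce the statement to an assertion about sections: since $ev_{0,1}\colon X^{[0,1]}\to X\times X$ is the fibration replacing the diagonal $\Delta_X\colon X\to X\times X$, one has $\TC(X)=\mathrm{secat}(\Delta_X)=\mathrm{secat}(ev_{0,1})$, and, applying this to $X=S_0$ and invoking Schwarz's theorem, $\TC_0(S)=\mathrm{secat}(\Delta_{S_0})\le m$ if and only if the $(m+1)$-fold fiberwise join $j_m$ of $ev_{0,1}$ admits a section. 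As $(\Lambda V\otimes\Lambda V,d)$ is a Sullivan model of $S_0\times S_0$, it then suffices to prove, for every $m$, the equivalence
\[ j_m \text{ admits a section}\ \Longleftrightarrow\ \rho_m \text{ admits a homotopy retraction}; \]
the least $m$ for which either side holds will then coincide, giving $\TC(\Lambda V)=\TC_0(S)$.

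The core of the argument is the construction of a Sullivan model of the fibration $j_m$ over $S_0\times S_0$. Starting from a relative Sullivan model of the multiplication,
\[ (\Lambda V\otimes\Lambda V,d)\hookrightarrow(\Lambda V\otimes\Lambda V\otimes\Lambda Z,D)\xrightarrow{\ \simeq\ }(\Lambda V,d), \]
which models $ev_{0,1}$ (the fiber $(\Lambda Z,\bar D)$ being a model of $\Omega S_0$), I would prove by induction on $m$ that $j_m$ is modeled over $\Lambda V\otimes\Lambda V$ by a relative Sullivan model of the projection $\rho_m\colon\Lambda V\otimes\Lambda V\to(\Lambda V\otimes\Lambda V)/(\ker\mu_{\Lambda V})^{m+1}$. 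The mechanism is that taking one further fiberwise join with $ev_{0,1}$ is a homotopy pushout on total spaces, hence a homotopy pullback of cdga's, whose net effect is to multiply the relevant ideal once more by $\ker\mu_{\Lambda V}$; after $m+1$ copies one is quotienting by $(\ker\mu_{\Lambda V})^{m+1}$. This mirrors exactly the LS-category case, where the $m$-th Ganea fibration of $S$ is modeled by a relative Sullivan model of $\Lambda V\to\Lambda V/(\Lambda^+V)^{m+1}=\Lambda V/\Lambda^{>m}V$, the operative ideal there being the augmentation ideal $\ker(\Lambda V\to\mathbb{Q})$ rather than $\ker\mu_{\Lambda V}$. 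Making this precise — controlling the homotopy pushouts and tracking the power of the ideal through the induction — is the step I expect to be the main obstacle.

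Granting this model, both implications become routine. If $\rho_m$ admits a homotopy retraction, then composing with the quasi-isomorphism $(\Lambda V\otimes\Lambda V\otimes\Lambda Z,D)\xrightarrow{\ \simeq\ }(\Lambda V\otimes\Lambda V)/(\ker\mu_{\Lambda V})^{m+1}$ and applying the Lifting Lemma yields a strict retraction of the KS-extension $(\Lambda V\otimes\Lambda V,d)\hookrightarrow(\Lambda V\otimes\Lambda V\otimes\Lambda Z,D)$; by the standard correspondence between retractions of a KS-extension and homotopy sections of the fibration it models, $j_m$ admits a section, whence $\TC_0(S)\le m$. Conversely, a section of $j_m$ yields, after applying $A_{PL}$ and lifting once more, a homotopy retraction of the model of $j_m$ and hence of $\rho_m$, so $\TC(\Lambda V)\le m$. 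Combining the two inequalities gives $\TC(\Lambda V)=\TC_0(S)$.
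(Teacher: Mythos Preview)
The paper does not supply a proof of this statement: it is quoted from Carrasquel \cite[Theorem~8]{ref5} as established background and stated without argument. There is therefore no ``paper's own proof'' to compare your proposal against.

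For what it is worth, your sketch is a reasonable high-level outline of the strategy underlying Carrasquel's original result (modeling the fiberwise join of the free-path fibration over $S_0\times S_0$ and identifying the resulting quotient with $(\Lambda V\otimes\Lambda V)/(\ker\mu_{\Lambda V})^{m+1}$), and you correctly flag the inductive control of the ideal powers through the homotopy pushouts as the substantive step. As written, however, it is a plan rather than a proof --- the sentence ``whose net effect is to multiply the relevant ideal once more by $\ker\mu_{\Lambda V}$'' is precisely where all the work lies --- and the present paper makes no attempt to fill in those details, simply citing the theorem.
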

	We recall that $\rho _ m$ admits a homotopy retraction if there exists a cdga morphism $r: \Lambda V\otimes \Lambda V \otimes \Lambda Z\rightarrow \Lambda V\otimes \Lambda V$ such that $r\circ i =id_{\Lambda V \otimes \Lambda V}$, where $ \Lambda V \otimes \Lambda V \stackrel{i}{\hookrightarrow } \Lambda V \otimes \Lambda V \otimes \Lambda Z$ is a relative Sullivan model of $\rho_m$. This is expressed by the following diagram 
	\begin{equation*}
		\xymatrix@=4pc{
			& \Lambda V \otimes \Lambda V \otimes \Lambda Z \ar@/_2.5pc/@{-->}[dl]_-r  \ar[d]^{\xi}_-{\simeq}\\
			\Lambda V \otimes \Lambda V \ar@{^{(}->}[ur]_-{i} \ar[r]_-{\rho _ m} & \frac{\Lambda V \otimes \Lambda V}{ (\ker \mu _{\Lambda V})^{m+1}}.
		}
	\end{equation*}
It follows also from Carrasquel's results that the invariants $\MTC(S)$ and $\HTC(S)$ defined in \cite{FLGPKT} and \cite{cjkl} can be  respectively identified to:
	\begin{itemize} 
			\item[•] $\MTC(\Lambda V)$ which is defined as the least integer $m$ for which $\rho _ m$ admit a homotopy retraction as a morphism of $\Lambda V \otimes \Lambda V$-module,
			\item[•] $\HTC(\Lambda V)$ which is defined as the least integer $m$ for which $\rho _ m$ is injective in cohomology.
		\end{itemize}
	 These invariants together with the rational zero-divisor cuplength $\zcl_{\mathbb{Q}}(H^*(\Lambda V )):= \zcl_{\mathbb{Q}}(H^*(S; \mathbb{Q}))$, defined as the largest integer $m$ such that there exist $\alpha _1, \cdots, \alpha _m \in \ker (H^*(\mu_{\Lambda V })) $ satisfying $\alpha _1 \cdots\alpha _m \neq 0 $, are ordered as follows: 
	$$\zcl_{\mathbb{Q}} (H^*(\Lambda V)) \leq \HTC (\Lambda V) \leq \MTC (\Lambda V) \leq \TC(\Lambda V).$$
When $S$ is formal, it follows from \cite{LM} that all the invariants coincide but as shown in \cite{FLGPKT} and \cite{cjkl} the invariants $\HTC$ and $\MTC$ can in general be larger than the zero-divisor cuplength. We refer to the survey \cite{carrasquelsurvey} for more details on the rational approximations of the topological complexity.
 
We now state and prove our main theorem in this section.
\begin{theorem} 
Let $(\Lambda V,d)$ be a Sullivan model and let $(\Lambda V \otimes \Lambda u, d)$ be an extension of $(\Lambda V,d)$ with $u$ is an element of odd degree. Then
\begin{itemize}
\item[(i)]$\TC(\Lambda V \otimes \Lambda u)\leq \TC(\Lambda V) +1 $. 
\item[(ii)]$\MTC(\Lambda V \otimes \Lambda u)\leq \MTC(\Lambda V) +1$.
\item[(iii)]$\HTC(\Lambda V \otimes \Lambda u)\leq \HTC(\Lambda V) +1$.
\end{itemize}
	\end{theorem}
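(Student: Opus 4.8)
The plan is to prove the three inequalities simultaneously, starting from a single algebraic description of $\ker\mu_{\Lambda W}$, where $\Lambda W:=\Lambda V\otimes\Lambda u$; the point is that the odd generator $u$ produces a \emph{square‑zero} zero‑divisor. First I would set $A:=\Lambda V\otimes\Lambda V$, $N:=\ker\mu_{\Lambda V}$, $B:=\Lambda W\otimes\Lambda W$, write $u_1,u_2$ for the two copies of $u$ inside $B$, and $\bar u:=u_1-u_2\in\ker\mu_{\Lambda W}$; I denote by $\rho^{V}_k$ and $\rho^{W}_k$ the projections of \cite[Theorem 8]{ref5} at level $k$ for $(\Lambda V,d)$ and $(\Lambda W,d)$. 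Since $|u|$ is odd one has $u_1^2=u_2^2=0$ and $u_1u_2=-u_2u_1$, hence $\bar u^2=0$; and $\mu_{\Lambda W}(\bar u)=0$ forces $d\bar u\in N$. Using the identification $B=A\otimes\Lambda\bar u\otimes\Lambda\sigma$ with $\sigma:=u_2$ (so that $d\sigma,d\bar u\in A$), one gets $\ker\mu_{\Lambda W}=N\cdot B+\bar uB$, and since the ideal $\bar uB$ squares to zero,
$$(\ker\mu_{\Lambda W})^{m+2}=N^{m+2}\cdot B+\bar u\,N^{m+1}\cdot B,$$
so that
$$\frac{B}{(\ker\mu_{\Lambda W})^{m+2}}\cong\Big(\tfrac{A}{N^{m+2}}\otimes\Lambda\sigma\Big)\ \oplus\ \bar u\cdot\Big(\tfrac{A}{N^{m+1}}\otimes\Lambda\sigma\Big),$$
a square‑zero extension of $C:=(A/N^{m+2})\otimes\Lambda\sigma$ by the dg $C$‑module $\bar u\cdot(A/N^{m+1}\otimes\Lambda\sigma)$, the two summands being coupled in the differential by multiplication by $d\bar u\in N$. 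The identity $\bar u^2=0$ is exactly what makes only this single ``$\bar u$‑layer'' appear.

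For (i) I may assume $\TC(\Lambda V)=m<\infty$; then $\rho^{V}_m$, and hence by monotonicity $\rho^{V}_{m+1}$, admits a homotopy retraction, and I want to produce one for $\rho^{W}_{m+1}$, which by Carrasquel's theorem gives $\TC(\Lambda W)\le m+1$. Starting from a relative Sullivan model $A\otimes\Lambda Z\xrightarrow{\simeq}A/N^{m+2}$ of $\rho^{V}_{m+1}$, one passes to a relative model $B_0\otimes\Lambda Z\xrightarrow{\simeq}C$ (with $B_0:=A\otimes\Lambda\sigma$), adjoins $\bar u$, and then adjoins a single further layer of generators $\overline Z_1$ which realizes a relative model of the projection $A/N^{m+2}\twoheadrightarrow A/N^{m+1}$ pushed into the $\bar u$‑direction; here again $\bar u^2=0$ is what prevents a second such layer from being needed, so that $Z\sqcup\overline Z_1$ has ``the size of'' the relative generators of $\rho^{V}_m$. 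The retraction of $\rho^{V}_{m+1}$ (tensored with $\mathrm{id}_{\Lambda\sigma}$ and extended by $\bar u\mapsto\bar u$, via the Lifting Lemma) handles the $C$‑summand, while the retraction of $\rho^{V}_m$ supplies the values on $\overline Z_1$; assembling these two pieces into one cdga morphism is the delicate step, since the naive formula on $\overline Z_1$ has to be corrected so as to be compatible with the $d\bar u$‑coupling — again an application of the Lifting Lemma.

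For (ii) one runs the same construction keeping track of the $\Lambda V\otimes\Lambda V$‑module structures: the relative model of $\rho^{W}_{m+1}$ above is obtained from that of $\rho^{V}_m$ by extension of scalars along $\Lambda V\otimes\Lambda V\hookrightarrow\Lambda W\otimes\Lambda W$ together with the adjunction of $\Lambda(\sigma,\bar u)$‑free and $\Lambda\overline Z_1$‑free pieces, so a $\Lambda V\otimes\Lambda V$‑module retraction of $\rho^{V}_m$ induces a $\Lambda W\otimes\Lambda W$‑module retraction of $\rho^{W}_{m+1}$, giving $\MTC(\Lambda W)\le\MTC(\Lambda V)+1$. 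For (iii) it suffices to show that $\rho^{W}_{m+1}$ is injective in cohomology: the square‑zero extension of the first paragraph yields a long exact sequence relating $H^*\!\big(B/(\ker\mu_{\Lambda W})^{m+2}\big)$ to $H^*(A/N^{m+1})$ and $H^*(A/N^{m+2})$, and a diagram chase comparing it with the analogous sequence for $B$ — using that $\rho^{V}_m$ is injective in cohomology (the hypothesis $\HTC(\Lambda V)\le m$) and so is $\rho^{V}_{m+1}$ (monotonicity) — gives the injectivity of $\rho^{W}_{m+1}$ in cohomology, i.e. $\HTC(\Lambda W)\le\HTC(\Lambda V)+1$.

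The main obstacle is the gluing in the second paragraph: proving that the $\bar u$‑direction contributes precisely one extra layer to a relative model of $\rho^{W}_{m+1}$, and that the homotopy retractions of $\rho^{V}_m$ and $\rho^{V}_{m+1}$ can be patched across the coupling by $d\bar u$ into a single retraction. This is exactly where the oddness of $u$ — through $\bar u^2=0$ — enters, and it is what produces the sharp bound $+1$ rather than the $+2$ that one would get merely from $\TC\le 2\cat$ together with the classical $\cat(\Lambda W)\le\cat(\Lambda V)+1$.
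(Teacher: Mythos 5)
Your algebraic starting point is essentially the same as the paper's: since $u$ has odd degree, $\bar u^2=0$, so
$$\ker\mu_{\Lambda W}\cong\ker\mu_{\Lambda V}\otimes\Lambda u\otimes\Lambda u+\Lambda V\otimes\Lambda V\otimes\ker\mu_{\Lambda u},$$
with the second summand squaring to zero, and hence $(\ker\mu_{\Lambda W})^{m+2}\subset(\ker\mu_{\Lambda V})^{m+1}\otimes\Lambda u\otimes\Lambda u$ (your $N^{m+2}B+\bar u N^{m+1}B$ is precisely this). Where you diverge is in what you do with it, and your route is both heavier and not actually closed. You try to build an explicit relative Sullivan model of $\rho^W_{m+1}$ as a two-layer extension (one layer $Z$ from a relative model of $\rho^V_{m+1}$, a second layer $\overline Z_1$ in the $\bar u$-direction) and then to assemble the homotopy retractions of $\rho^V_{m+1}$ and $\rho^V_m$ into a single cdga retraction compatible with the $d\bar u$-coupling. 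You yourself flag this assembly as ``the delicate step,'' and indeed it is: you never exhibit the correction terms on $\overline Z_1$, never set up the lifting diagram you claim handles them, and never verify the result is a cdga map. As written, that is the gap.

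The paper sidesteps the gluing entirely. The inclusion $(\ker\mu_{\Lambda W})^{m+2}\subset(\ker\mu_{\Lambda V})^{m+1}\otimes\Lambda u\otimes\Lambda u$ is used to show that $\rho^V_m\otimes\mathrm{id}_{\Lambda u}\otimes\mathrm{id}_{\Lambda u}$ \emph{factors through} $\rho^W_{m+1}$. One then tensors the relative model $i$, the surjective quasi-isomorphism $\xi$, and the retraction $r$ of $\rho^V_m$ with $\mathrm{id}_{\Lambda u}\otimes\mathrm{id}_{\Lambda u}$, writes the factorization into the obvious square against an arbitrary relative model of $\rho^W_{m+1}$, and applies the Lifting Lemma \emph{once} to produce $\tau$; the composite $(r\otimes\mathrm{id}\otimes\mathrm{id})\circ\tau$ is the retraction of $\rho^W_{m+1}$. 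No explicit relative model for $\rho^W_{m+1}$ is constructed, no square-zero extension is analyzed, no patching is required, and only the retraction of $\rho^V_m$ is used --- you also invoke one for $\rho^V_{m+1}$, which is unnecessary. The identical diagram then gives (ii) and (iii) by replacing the cdga retraction with a $\Lambda V\otimes\Lambda V$-module retraction, resp.\ a linear differential retraction; your separate long-exact-sequence argument for (iii) is another detour. So: your ideal-theoretic analysis is correct and matches the paper's, but the intended payoff --- factorization of $\rho^V_m\otimes\mathrm{id}$ through $\rho^W_{m+1}$ plus a single application of the Lifting Lemma --- is both simpler and actually completes the proof, whereas your construction leaves the key assembly unresolved.
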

	\begin{proof}
		Suppose that $\TC(\Lambda V)=m$. Then $m$ is the least integer for which the projection $\rho _m: \Lambda V \otimes \Lambda V \rightarrow\frac{\Lambda V \otimes \Lambda V}{ (\ker \mu _{\Lambda V})^{m+1}}$ admits a homotopy retraction $r$ as represented in the following diagram  
		\begin{equation*}
			\xymatrix@=4pc{
				& \Lambda V \otimes \Lambda V \otimes \Lambda W \ar@/_2.5pc/@{-->}[dl]_-r  \ar[d]^{\xi}_-{\simeq}\\
				\Lambda V \otimes \Lambda V \ar@{^{(}->}[ur]_-{i} \ar[r]_-{\rho _ m} & \frac{\Lambda V \otimes \Lambda V}{ (\ker \mu _{\Lambda V})^{m+1}}
			}
		\end{equation*}
		where $\xi$ is a quasi-isomorphism and $i$ is a relative model of $\rho_{m}$. Through the extension $\Lambda V \otimes \Lambda V \hookrightarrow \Lambda V \otimes \Lambda u \otimes \Lambda V \otimes \Lambda u \cong \Lambda V \otimes \Lambda V \otimes \Lambda u \otimes \Lambda u $ we obtain the morphisms  $\rho_{m} \otimes id_{\Lambda u} \otimes id_{\Lambda u}$, $i \otimes id_{\Lambda u} \otimes id_{\Lambda u}$, $ \xi \otimes id_{\Lambda u} \otimes id_{\Lambda u}$ and  $r \otimes id_{\Lambda u} \otimes id_{\Lambda u}$  which satisfy $ (\xi \otimes id_{\Lambda u} \otimes id_{\Lambda u}) ( i \otimes id_{\Lambda u} \otimes id_{\Lambda u})= \rho_m \otimes id_{\Lambda u} \otimes id_{\Lambda u}$ and $ (r \otimes id_{\Lambda u} \otimes id_{\Lambda u})( i \otimes id_{\Lambda u} \otimes id_{\Lambda u})=id$.
		Note also that $ \xi \otimes id_{\Lambda u} \otimes id_{\Lambda u}$ is a quasi-isomorphism. On the other hand, let consider $$(\Lambda V\otimes \Lambda u)\otimes ( \Lambda V\otimes \Lambda u) \hookrightarrow   (\Lambda V\otimes \Lambda u)\otimes ( \Lambda V\otimes \Lambda u) \otimes \Lambda Z \xrightarrow{\simeq} \frac{ (\Lambda V\otimes \Lambda u)\otimes ( \Lambda V\otimes \Lambda u)}{ (\ker \mu _{\Lambda V\otimes \Lambda u})^{m+2}}$$  a relative Sullivan model of $\rho _{m+1}: (\Lambda V\otimes \Lambda u)\otimes ( \Lambda V\otimes \Lambda u) \rightarrow \frac{(\Lambda V\otimes \Lambda u)\otimes ( \Lambda V\otimes \Lambda u)}{(\ker \mu _{\Lambda V\otimes \Lambda u})^{m+2}}$.\\
		Since $|u|$ is odd, we have $(\ker \mu _{\Lambda u})^2=0$. This fact together with  $$\ker \mu _{\Lambda V \otimes \Lambda u}\cong \ker \mu _{\Lambda V} \otimes \Lambda u \otimes \Lambda u + \Lambda V \otimes \Lambda V \otimes \ker \mu _{\Lambda u},$$
		implies that the composition 
		$$(\ker \mu _{\Lambda V\otimes \Lambda u})^{m+2} \hookrightarrow (\Lambda V\otimes \Lambda V)\otimes ( \Lambda u\otimes \Lambda u) \xrightarrow {\rho _m \otimes id_{\Lambda u} \otimes id_{\Lambda u}} \frac{\Lambda V\otimes \Lambda V}{(\ker \mu _{\Lambda V})^{m+1}}\otimes \Lambda u \otimes \Lambda u$$
		is trivial. Therefore $\rho_{m} \otimes id_{\Lambda u} \otimes id_{\Lambda u}$ can be factorized as 
		\[
		\xymatrix{
			(\Lambda V \otimes \Lambda V )\otimes (\Lambda u \otimes \Lambda u)\ar[dr]_-{\rho_{m+1}} \ar[rr]^{\rho _m \otimes id_{\Lambda u} \otimes id_{\Lambda u}} && \frac{\Lambda V \otimes \Lambda V}{(\ker \mu _{\Lambda V})^{m+1}} \otimes (\Lambda u \otimes \Lambda u)\\
			&\frac{(\Lambda V \otimes \Lambda u) \otimes( \Lambda V \otimes \Lambda u)}{ (\ker \mu _{\Lambda V \otimes \Lambda u})^{m+2}}. \ar[ru] 
		}
		\]
	   Using the relative model of $\rho_{m+1}$ we obtain the diagram
		\[
		\xymatrix{
			\Lambda V \otimes \Lambda V  \otimes \Lambda u \otimes \Lambda u \ar@{^(->}[d] \ar[drr]^{ \rho _ m \otimes id_{\Lambda u} \otimes id_{\Lambda u}}\\
			(\Lambda V \otimes \Lambda u)\otimes ( \Lambda V \otimes \Lambda u)\otimes \Lambda Z \ar[r] & \frac{(\Lambda V \otimes \Lambda u)\otimes ( \Lambda V \otimes \Lambda u)}{(\ker \mu _{ \Lambda V \otimes \Lambda u})^{m+2}} \ar[r] & \frac{\Lambda V \otimes \Lambda V}{ (\ker \mu _{\Lambda V})^{m+1}} \otimes \Lambda u \otimes \Lambda u. 
		}
		\]
		
		We can then form the following commutative solid diagram
		\begin{equation*}
			\xymatrixcolsep{0.02pc}\xymatrix{
				 \Lambda V \otimes \Lambda V \otimes \Lambda u\otimes \Lambda u
				\ar@{^{(}->}[d] \ar[rr]^{i \otimes id_{\Lambda u} \otimes id _{\Lambda u}}  &&(\Lambda V \otimes \Lambda V \otimes \Lambda W) \otimes (\Lambda u \otimes \Lambda u) \ar[d]^{\xi \otimes id_{\Lambda u} \otimes id_{\Lambda u}} _-{\simeq}
				\\
				(\Lambda V \otimes \Lambda u )\otimes (\Lambda V \otimes \Lambda u )\otimes \Lambda Z \ar@{-->}[urr]^{\tau}  \ar[r] & \frac{(\Lambda V \otimes \Lambda u )\otimes (\Lambda V \otimes \Lambda u )}{(\ker \mu _{\Lambda V \otimes \Lambda u})^{m+2}} \ar[r] & \frac{\Lambda V \otimes \Lambda V}{(\ker \mu _{\Lambda V})^{m+1}} \otimes (\Lambda u \otimes \Lambda u)}
		\end{equation*}
		and, by the lifting lemma, there is a morphism $\tau$ which makes the complete diagram commutative. As a result, the composition  $(r \otimes Id_{\Lambda u }\otimes Id_{\Lambda u } ) \circ \tau$ is the desired homotopy retraction of $\rho _{m+1}: (\Lambda V \otimes \Lambda u)\otimes (\Lambda V \otimes \Lambda u)\rightarrow \frac{(\Lambda V \otimes \Lambda u)\otimes (\Lambda V \otimes \Lambda u)}{(\ker \mu _{\Lambda V \otimes \Lambda u})^{m+2}}$, and we conclude that $\TC(\Lambda V \otimes \Lambda u)\leq m+1$.\\
		Assertions (ii) and (iii) are obtained in the same way as (i) by taking respectively a $\Lambda V \otimes \Lambda V$-retraction and a linear differential retraction instead of a (cdga) homotopy retraction. 
	\end{proof}
	By induction we can generalize this result to get the following corollary, which can be seen as a version for $\TC$ of \cite[Prop 30.7]{FHT}. 
	\begin{corollary} \label{corollary3.1}
		If $(\Lambda V,d) \hookrightarrow (\Lambda V \otimes \Lambda U, d) $ is an extension of $(\Lambda V,d)$, where $U$ is a graded vector space concentrated in odd degrees with $d(U) \subset \Lambda V$ and $\dim U=n$, then
		\begin{itemize}
			\item[(i)] $\TC(\Lambda V \otimes \Lambda U) \leq \TC(\Lambda V ) +n$.
			\item[(ii)] $\MTC(\Lambda V \otimes \Lambda U) \leq \MTC(\Lambda V ) +n.$
			\item[(iii)] $\HTC(\Lambda V \otimes \Lambda U) \leq \HTC(\Lambda V ) +n$.  
		\end{itemize}
	\end{corollary}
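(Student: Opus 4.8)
The plan is to prove all three inequalities simultaneously by induction on $n=\dim U$, using the Theorem above both as the base case and as the inductive engine.

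First I would fix a homogeneous basis $u_1,\dots,u_n$ of $U$; by hypothesis all the $u_i$ have odd degree. Setting $V_0=V$ and $V_k=V\oplus\langle u_1,\dots,u_k\rangle$ for $1\le k\le n$, we obtain a tower
\[(\Lambda V,d)=(\Lambda V_0,d)\hookrightarrow(\Lambda V_1,d)\hookrightarrow\cdots\hookrightarrow(\Lambda V_n,d)=(\Lambda V\otimes\Lambda U,d).\]
The role of the hypothesis $d(U)\subset\Lambda V$ is that $du_i\in\Lambda V\subseteq\Lambda V_k$ for every $i$ and every $k$, so that each $(\Lambda V_k,d)$ is genuinely a sub-cdga of $(\Lambda V_{k+1},d)$, each $\Lambda V_k$ is a Sullivan algebra (it is $\Lambda V$ with finitely many odd generators adjoined), and the inclusion $(\Lambda V_k,d)\hookrightarrow(\Lambda V_{k+1},d)\cong(\Lambda V_k\otimes\Lambda u_{k+1},d)$ is exactly an extension of the type treated in the Theorem, namely by a single generator of odd degree.

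Then I would apply parts (i), (ii) and (iii) of the Theorem to the one-step extension $(\Lambda V_k,d)\hookrightarrow(\Lambda V_{k+1},d)$ for each $0\le k\le n-1$, which gives $\TC(\Lambda V_{k+1})\le\TC(\Lambda V_k)+1$ and the analogous inequalities for $\MTC$ and $\HTC$. Chaining these $n$ inequalities yields $\TC(\Lambda V\otimes\Lambda U)\le\TC(\Lambda V)+n$, and likewise for the other two invariants, which is the assertion of the corollary.

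The argument is a routine induction and I do not expect a serious obstacle; the only point deserving a word of care — and the reason the hypothesis is phrased as $d(U)\subset\Lambda V$ rather than merely ``$\Lambda V\otimes\Lambda U$ is a KS-extension'' — is the verification that the intermediate algebras $\Lambda V_k$ are honest sub-cdgas, so that every step of the tower legitimately has the form required by the Theorem and no reordering subtlety arises.
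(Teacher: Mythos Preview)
Your proposal is correct and matches the paper's own argument, which simply states that the corollary follows ``by induction'' from the single-generator theorem. Your extra care in noting that the hypothesis $d(U)\subset\Lambda V$ guarantees each intermediate $(\Lambda V_k,d)$ is a genuine Sullivan sub-cdga is a welcome elaboration of a point the paper leaves implicit.
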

\section{An upper bound of $\TC_0$ for certain pure elliptic spaces}
We recall that a space $S$ is said elliptic if $\pi_*(S) \otimes \mathbb{Q}$ and $H^*(S; \mathbb{Q})$ are finite dimensional and pure if it admits a Sullivan model $(\Lambda V,d)$ such that $d V^{even}=0$ and $dV^{odd} \subset \Lambda V^{even}$. If $(\Lambda V,d)$ is minimal then $V\cong \pi_*(S) \otimes \mathbb{Q}$ and the homotopy characteristic of $S$ given by $\chi_{\pi}(S)= \dim \pi_{even}(S) \otimes \mathbb{Q}-\dim \pi_{odd}(S) \otimes \mathbb{Q} $ coincides with $\chi_{\pi}(\Lambda V)= \dim V^{even}-\dim V^{odd} $.
Abusing language we use the terminology "pure", "formal", "elliptic" for both space and its minimal Sullivan model. 
Recall that by ellipticity we always have $\chi_{\pi}(\Lambda V) \leq 0$ and that an elliptic minimal model $(\Lambda V,d)$ is called an $F_0$-model if $\chi_{\pi}(\Lambda V)=0$. Theorem \ref{th1.1} follows from  

	\begin{theorem} If $(\Lambda V,d)$ is a pure elliptic minimal Sullivan model which is formal then 
		\begin{equation*} \label{th4.1}
			\TC(\Lambda V)= 2 \cat(\Lambda V)+ \chi_{\pi}(\Lambda V).
		\end{equation*}
		\end{theorem}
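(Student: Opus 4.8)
The plan is to show both inequalities and combine them with a known lower bound coming from the cohomological side. Since $S$ is pure elliptic and formal, its rational cohomology is a complete intersection: $H^*(\Lambda V) \cong \Lambda V^{even}/(dV^{odd})$ where the images of a basis of $V^{odd}$ form a regular sequence in $\Lambda V^{even}$. For such an algebra the zero-divisor cuplength can be computed explicitly. First I would recall that for $S$ formal one has $\TC(\Lambda V)=\TC_0(S)=\zcl_{\mathbb{Q}}(H^*(S;\mathbb{Q}))$ (this is the fact quoted in the Introduction, attributed to \cite{LM}, that all the rational approximations coincide in the formal case); likewise $\cat(\Lambda V)=\cat_0(S)=\cl_{\mathbb{Q}}(H^*(S;\mathbb{Q}))$. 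So the statement reduces to the purely algebraic identity
\[
\zcl_{\mathbb{Q}}\!\left(\frac{\Lambda V^{even}}{(dV^{odd})}\right) = 2\,\cl_{\mathbb{Q}}\!\left(\frac{\Lambda V^{even}}{(dV^{odd})}\right) + \chi_{\pi}(\Lambda V),
\]
for a complete intersection with $\dim V^{even}$ generators and $\dim V^{odd}$ relations.

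For the upper bound I would argue as follows. If $A = H^*(\Lambda V)$, then $A\otimes A$ surjects onto $A$ via multiplication, and $\ker\mu$ is generated by the elements $x\otimes 1 - 1\otimes x$ for $x$ ranging over generators. The zero-divisor cuplength is the top nonzero power of $\ker\mu$ in $A\otimes A$; since $A\otimes A$ is itself a complete intersection quotient of $\Lambda(V^{even}\oplus V^{even})$ by $2\dim V^{odd}$ relations, its socle degree (in the grading by word-length in the generators) is at most $2\dim V^{even} - 2\dim V^{odd} = 2\cl_{\mathbb{Q}}(A) + \ldots$ — more carefully, $\cl_{\mathbb{Q}}(A)$ is the top word-length power of $A^+$, which for this complete intersection equals $\sum(\text{degrees of the }x_i) - \sum(\text{degrees of the relations})$ normalized appropriately; the point is that $\zcl_{\mathbb{Q}}(A)\le \cl_{\mathbb{Q}}(A\otimes A)$ and one checks $\cl_{\mathbb{Q}}(A\otimes A) = 2\cl_{\mathbb{Q}}(A) + \chi_{\pi}(\Lambda V)$ using that the word-length top of a tensor product is the sum of the tops and that $\chi_{\pi}$ measures precisely the defect $2(\#\text{gens}-\#\text{rels})$ contributed by the extra copy. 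Alternatively, and perhaps more cleanly, I would deduce the upper bound $\TC_0(S)\le 2\cat_0(S)+\chi_\pi(S)$ directly from Theorem \ref{th1.2}: a formal pure elliptic space trivially satisfies its hypotheses (take the $F_0$-model to be built from an appropriate sub-collection, and note $dV\subset\Lambda^k V$ holds — in fact for a formal complete intersection one can take homogeneous relations), so the second inequality of Theorem \ref{th1.2} gives $\TC(\Lambda V)\le 2\cat(\Lambda V)+\chi_\pi(\Lambda V)$ immediately.

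For the lower bound I would produce an explicit nonzero product of $2\cat_0(S)+\chi_\pi(S)$ zero-divisors. Let $x_1,\dots,x_q$ be a homogeneous basis of $V^{even}$ and $y_1,\dots,y_p$ a basis of $V^{odd}$, so $\chi_\pi = q-p$ and $A = \Lambda(x_1,\dots,x_q)/(dy_1,\dots,dy_p)$. Because $A$ is a finite-dimensional complete intersection, its socle is one-dimensional and generated by a monomial-type class $\omega$ (the image of the product of the "top powers" $x_i^{n_i}$ surviving after dividing by the regular sequence), and $\cl_{\mathbb{Q}}(A)$ is the word-length of $\omega$ in the $x_i$. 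The key observation is that in $A\otimes A$ the element $\omega\otimes\omega$ is nonzero and has word-length $2\cl_{\mathbb{Q}}(A)$, and one can write $\omega\otimes\omega$, up to sign and units, as a product of zero-divisors $\bar x_i = x_i\otimes 1 - 1\otimes x_i$: each factor $x_i^{n_i}\otimes x_i^{n_i}$ can be rewritten in the ideal generated by $\bar x_i$, and a bookkeeping argument (exactly the kind of computation carried out for $F_0$-spaces, where $p=q$ and one gets $\TC_0 = 2\cat_0$) shows that assembling all of them uses $2\sum n_i$ factors, but the $p$ relations allow a reduction identifying this with $2\cl_{\mathbb{Q}}(A)+ (q-p)$ effective factors. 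I expect the main obstacle to be precisely this last bookkeeping: making rigorous the claim that $\omega\otimes\omega$ is a product of exactly $2\cat_0(S)+\chi_\pi(S)$ zero-divisors and not fewer or more — this is where the structure of the complete intersection (regularity of the sequence $dy_j$, and the fact that $\omega$ is a socle generator) must be used carefully, and it is cleanest to phrase it via the Poincaré-duality / socle-degree formula for $A\otimes A$ rather than by hand-manipulating monomials. Once both bounds are in place, they coincide and give the asserted equality.
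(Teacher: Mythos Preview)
Your argument rests on the claim that for a formal pure elliptic model one has $H^*(\Lambda V)\cong \Lambda V^{even}/(dV^{odd})$, i.e.\ that the cohomology is a complete intersection quotient of a polynomial ring. This is false in general: it holds only when $\chi_\pi(\Lambda V)=0$ (the $F_0$ case). Already for an odd sphere, $V^{even}=0$ and $V^{odd}=\langle z\rangle$, so your formula gives $\mathbb{Q}$ while $H^*=\Lambda z$. The correct structural fact (this is Amann's lemma, \cite{AM}) is that a formal pure elliptic model splits as $(\Lambda V',d)\otimes(\Lambda(z_1,\dots,z_l),0)$ with $(\Lambda V',d)$ an $F_0$-model and the $z_i$ of odd degree; the cohomology is then a complete intersection tensored with an exterior algebra on $l=-\chi_\pi(\Lambda V)$ odd generators. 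Everything you wrote after ``So the statement reduces to the purely algebraic identity'' is built on the wrong description of $H^*$, so neither your upper- nor lower-bound sketch goes through as stated.

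Your alternative route for the upper bound, appealing to Theorem~\ref{th1.2}, is circular: in the paper that theorem is proved \emph{using} the present one (the step $\TC(\Lambda Z)=2\cat(\Lambda Z)$ for the $F_0$-submodel is exactly the $\chi_\pi=0$ case of this statement). Moreover, your claim that a formal pure elliptic model automatically has $dV\subset\Lambda^k V$ for a single $k$ is unjustified and false in general; $F_0$-models need not have a homogeneous-length differential.

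The paper's proof instead exploits the product decomposition directly: starting from $(\Lambda V,d)\cong(\Lambda V',d)\otimes(\Lambda(z_1,\dots,z_l),0)$, it uses formality to get $\TC(\Lambda V)=\MTC(\Lambda V)$, then the product formula $\MTC(\Lambda V)=\MTC(\Lambda V')+l$ from \cite{JMP,CJC}, and finally the observation that since $H^*(\Lambda V')=H^{even}(\Lambda V')$ one has $\zcl_{\mathbb{Q}}(H^*(\Lambda V'))=2\cl_{\mathbb{Q}}(H^*(\Lambda V'))$, giving $\MTC(\Lambda V')=2\cat(\Lambda V')$. A short computation with $\cat(\Lambda V)=\cat(\Lambda V')+l$ then yields the result. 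If you want to salvage your approach, the missing ingredient is precisely this splitting: apply your complete-intersection reasoning to $\Lambda V'$ only, and handle the exterior factor separately.
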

	\begin{proof}
		Since $(\Lambda V,d)$ is formal, it can be decomposed as 
		$$ (\Lambda V,d)= (\Lambda V',d) \otimes (\Lambda (z_1, \cdots, z_l), 0)$$
		where $(\Lambda V',d)$ is an $F_0$-model and $z_i$ are all generators of odd degree (see for instance,  \cite[Lemma 1.5]{AM}). As $( \Lambda V,d)$ is formal, we have by \cite{LM}
		\begin{eqnarray*}
			\TC( \Lambda V)&=& \MTC( \Lambda V)=\zcl _{\mathbb{Q}}(H^*(\Lambda V)).
		\end{eqnarray*} 
		
		As is well-known we also have $\cat (\Lambda V)=\cl_{\mathbb{Q}}(H^*(\Lambda V))$ \cite{FHT}.
		By \cite{JMP} (see also \cite[th.12]{CJC}) we have that $ \MTC( \Lambda V)=\MTC( \Lambda V')+l$. Moreover since $( \Lambda V',d)$ is an $F_0$-model, we have $H^*(\Lambda V')=H^{even}(\Lambda V')$. Therefore we have $\zcl_{\mathbb{Q}}(H^*(\Lambda V'))=2\cl_{\mathbb{Q}}( H^*(\Lambda V')) $ \cite[Corollary 30]{CJV}. Since $\Lambda V'$ is formal, it follows that $\MTC( \Lambda V')=2 \cdot \cat(\Lambda V')$ and therefore  
		$$\TC( \Lambda V)=  2\cat( \Lambda V')+l .$$
		On the other hand, by the additivity of $\cat_0$ with respect to the product (see for instance, \cite[Th 30.2]{FHT}), we have  
		\begin{eqnarray*}
			2\cat(\Lambda V)+ \chi_{\pi}( \Lambda V)&=& 2 (\cat(\Lambda V') +l) -l\\
			&=& 2 \cat(\Lambda V') +l.
		\end{eqnarray*} 
		Finally we get $\TC( \Lambda V)=2\cat(\Lambda V) +\chi_{\pi}( \Lambda V)$.
	\end{proof}
Our next result, which corresponds to Theorem \ref{th1.2} from the introduction, will permits us to obtain, under some conditions, an upper bound for $\TC(\Lambda V)$ in the non-formal case. 
\begin{theorem}\label{th4.2}
	Let $(\Lambda V,d)$ be a pure elliptic minimal Sullivan model. If there exists an extension $(\Lambda Z, d) \hookrightarrow (\Lambda  V,d) $ where $Z^{even}=V^{even}$ and $(\Lambda  Z,d)$ is an $F_0$-model, then $$ \TC(\Lambda V)\leq 2\cat(\Lambda Z) -\chi_{\pi}(\Lambda V).$$	
	If moreover there exists $k$ such that $d V \subset \Lambda ^k V$, then
$$ \TC(\Lambda V)\leq 2\cat(\Lambda V) +\chi_{\pi}(\Lambda V).$$	
\end{theorem}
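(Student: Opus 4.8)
The plan is to deduce both inequalities from Corollary~\ref{corollary3.1} and Theorem~\ref{th4.1} after writing $(\Lambda V,d)$ as a tensor product of $(\Lambda Z,d)$ with a purely odd factor. Since $Z^{even}=V^{even}$, the extension $(\Lambda Z,d)\hookrightarrow(\Lambda V,d)$ only adjoins generators of odd degree; so I would choose a graded complement $W$ of $Z^{odd}$ inside $V^{odd}$ and identify $\Lambda V\cong\Lambda Z\otimes\Lambda W$ as graded algebras, with $W$ concentrated in odd degrees. Purity forces $dW\subset dV^{odd}\subset\Lambda V^{even}=\Lambda Z^{even}\subset\Lambda Z$, so that $(\Lambda Z,d)\hookrightarrow(\Lambda Z\otimes\Lambda W,d)$ is exactly a KS-extension of the type treated in Corollary~\ref{corollary3.1}, its fibre being $(\Lambda W,0)$. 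Finally, as $(\Lambda Z,d)$ is an $F_{0}$-model we have $\dim Z^{odd}=\dim Z^{even}=\dim V^{even}$, hence $\dim W=\dim V^{odd}-\dim V^{even}=-\chi_{\pi}(\Lambda V)$.

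For the first inequality I would simply chain the available estimates. Corollary~\ref{corollary3.1}(i) applied to the extension above gives
\[
\TC(\Lambda V)=\TC(\Lambda Z\otimes\Lambda W)\leq\TC(\Lambda Z)+\dim W=\TC(\Lambda Z)-\chi_{\pi}(\Lambda V),
\]
and since $(\Lambda Z,d)$ is pure, elliptic and (being an $F_{0}$-model) formal, Theorem~\ref{th4.1} yields $\TC(\Lambda Z)=2\cat(\Lambda Z)+\chi_{\pi}(\Lambda Z)=2\cat(\Lambda Z)$. Combining gives $\TC(\Lambda V)\leq 2\cat(\Lambda Z)-\chi_{\pi}(\Lambda V)$, which is the first assertion.

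For the second inequality I would reduce it to a statement about LS-category alone. It suffices to prove $\cat(\Lambda Z)\leq\cat(\Lambda V)+\chi_{\pi}(\Lambda V)$, i.e. $\cat(\Lambda Z)+\dim W\leq\cat(\Lambda V)$: granting this, the first assertion gives
\[
\TC(\Lambda V)\leq 2\cat(\Lambda Z)-\chi_{\pi}(\Lambda V)\leq 2\cat(\Lambda V)+2\chi_{\pi}(\Lambda V)-\chi_{\pi}(\Lambda V)=2\cat(\Lambda V)+\chi_{\pi}(\Lambda V).
\]
Half of the corresponding identity is free: applying \cite[Prop.~30.7]{FHT} (the $\cat$-analogue of Corollary~\ref{corollary3.1}) to the same KS-extension gives $\cat(\Lambda V)\leq\cat(\Lambda Z)+\dim W$. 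So the whole matter reduces to the reverse bound $\cat(\Lambda V)\geq\cat(\Lambda Z)+\dim W$, and this is where the extra hypothesis $dV\subset\Lambda^{k}V$ enters.

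The step I expect to be the genuine obstacle is precisely this lower bound for $\cat(\Lambda V)$: adjoining the odd generators $W$ must not drop the LS-category below that of the ``untwisted'' product $\Lambda Z\otimes(\Lambda W,0)$. Without a homogeneity hypothesis this can fail, but when $dV\subset\Lambda^{k}V$ the differential respects an auxiliary word-length grading, which rigidifies the computation of $\cat_{0}$ (equivalently, for elliptic spaces, of the Toomer invariant $e_{0}$). In the coformal case $k=2$ the bound is immediate from the identity $\cat_{0}=\dim V^{odd}$ for coformal elliptic models (see \cite{ref0} and the introduction), applied to both $\Lambda Z$ and $\Lambda V$ (note that $(\Lambda Z,d)$ is again coformal): indeed $\cat(\Lambda Z)+\dim W=\dim Z^{odd}+\dim W=\dim V^{even}+(\dim V^{odd}-\dim V^{even})=\dim V^{odd}=\cat(\Lambda V)$. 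In general I would establish the analogous estimate for homogeneous pure elliptic models by computing $e_{0}(\Lambda V)$ from the pure model and checking $e_{0}(\Lambda V)\geq e_{0}(\Lambda Z)+\dim W$, which then gives $\cat(\Lambda V)\geq\cat(\Lambda Z)+\dim W$ as required.
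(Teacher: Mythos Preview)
Your argument for the first inequality is essentially the paper's proof: decompose $V=Z\oplus U$ with $U$ odd, apply Corollary~\ref{corollary3.1} to get $\TC(\Lambda V)\leq\TC(\Lambda Z)+\dim U$, invoke Theorem~\ref{th4.1} for the $F_0$-model $(\Lambda Z,d)$, and compute $\dim U=-\chi_\pi(\Lambda V)$.

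For the second inequality your reduction to $\cat(\Lambda Z)+\dim W\leq\cat(\Lambda V)$ is correct and is exactly what the paper establishes (in fact with equality). The gap in your write-up is that for general $k$ you only announce a plan (``compute $e_0(\Lambda V)$ and check\ldots'') without carrying it out. The paper closes this gap in one stroke by invoking the Lechuga--Murillo formula \cite{LM02} for the rational LS-category of a pure elliptic model with homogeneous differential $d=d_k$: writing $n=\dim V^{even}$ and $m=\dim V^{odd}$, one has
\[
\cat(\Lambda Z)=n(k-2)+n=n(k-1),\qquad \cat(\Lambda V)=n(k-2)+m.
\]
Since $\dim W=m-n$, this gives $\cat(\Lambda Z)+\dim W=n(k-1)+(m-n)=n(k-2)+m=\cat(\Lambda V)$, which is precisely the identity you need (and subsumes your coformal computation as the case $k=2$). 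So rather than an abstract Toomer-invariant argument, the missing ingredient is simply to cite \cite{LM02} and plug in the explicit values.
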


\begin{remark}
{\rm	According to \cite[Lemma 3.3]{J} (see the remark after the proof of Lemma 3.3), if $V^{even}$ is concentrated in a single degree then it is always possible to construct an $F_0$-model $(\Lambda Z,d)$ as required in hypothesis of Theorem \ref{th4.2}}
\end{remark}
\begin{proof}[Proof of Theorem \ref{th4.2}]
	In the conditions of the theorem we can suppose that $V= Z\oplus U$ where $U$ is a subspace of $V$ concentrated in odd degrees. By Corollary \ref{corollary3.1}, we have  \begin{eqnarray*}
		\TC ( \Lambda V)&=& \TC( \Lambda (Z\oplus U),d)\\
		&\leq & \TC( \Lambda Z,d) + \dim U.
	\end{eqnarray*}
Since $(\Lambda Z,d)$ is an $F_0$-model, we have, by Theorem \ref{th4.1}, $\TC( \Lambda Z)=2\cdot \cat(\Lambda Z)$ and we obtain
$$\TC ( \Lambda V)\leq  2 \cdot \cat(\Lambda Z) + \dim U. \quad (*)
$$

On the other hand $\dim V^{even}=\dim Z^{even}=\dim Z^{odd}$ implies that
 \begin{eqnarray*}
	\dim U&=& \dim V -\dim Z\\
	&=& \dim V^{even} +\dim V^{odd} - \dim Z^{even} - \dim Z^{odd} \\
	&= & \dim V^{odd}- \dim V^{even}\\
	&=& - \chi_{\pi}(\Lambda V). 
\end{eqnarray*}
We then obtain $\TC (\Lambda V)\leq 2 \cat(\Lambda Z) - \chi_{\pi}(\Lambda V)$.

We now suppose that the differential $d$  is homogeneous of rank $k$  $(d=d_k)$. By  Lechuga-Murillo formula for the rational LS-category of such an elliptic  space \cite{LM02}, if $\dim V^{even}=n$  and $\dim V^{odd}=m$, thus $$\cat(\Lambda Z)= n(k-2)+n=n(k-1) \quad \text{and} \quad \cat(\Lambda V)= n(k-2)+m.$$
As we have
\begin{eqnarray*}
2 \cdot \cat(\Lambda V) + \chi _{\pi} ( \Lambda V) &=& 2[ n(k-2) +m]+n-m\\
&=& 2n(k-1) -2n +2m +n-m\\
&=& 2n(k-1) +m-n\\
&=& 2 \cat(\Lambda Z) +\dim U,
\end{eqnarray*}
inequality $(*)$ finally implies $\TC(\Lambda V)\leq 2\cat(\Lambda V)+ \chi_{\pi}( \Lambda V)$. 
\end{proof}

	\section{The coformal case}
	Along this section, we consider  a pure coformal model $(\Lambda V,d)$ where $\dim V$ is finite. Recall that coformal means that $d V\subset \Lambda ^2 V$. We denote by  $X=V^{even}$ and $Y=V^{odd}$. As the model is pure we have $dX=0$ and $dY\subset \Lambda X$. We note that $(\Lambda V,d)$ is not required to be elliptic. However, given a basis ${\mathcal B}= \{x_1, \cdots , x_n\}$ of $X$, we associate with $(\Lambda V,d)$ the elliptic extension $$ \Lambda W_{\mathcal B} =\Lambda (X \oplus Y \oplus U),$$
	where $U$ is the graded vector space generated by $u_1,\cdots ,u_n$ with $du_i=x_i^2$ for all $i=1,\cdots , n$. 
Let point out that the elliptic extension $(\Lambda W_{\mathcal B} ,d)$ associated to $(\Lambda V,d)$ satisfies the inequality in the proposition below:
\begin{proposition} \label{pr5.1}
The elliptic extension $(\Lambda W_{\mathcal B} ,d)$ of the model $(\Lambda V,d)$ satisfies 
$$\TC( \Lambda W _{\mathcal B}) \leq \dim W_{\mathcal B}=2\dim V^{even} + \dim V^{odd}.$$
\end{proposition}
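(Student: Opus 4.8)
The plan is to exhibit $(\Lambda W_{\mathcal B},d)$ as a KS-extension of a product of (models of) even-dimensional spheres by an exterior algebra on odd generators, and then to combine Corollary \ref{corollary3.1} with Theorem \ref{th4.1}.

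First I would isolate the sub-cdga $(\Lambda A,d)$ with $A=X\oplus U$; since $dX=0$ and $du_i=x_i^2$, one has $(\Lambda A,d)=\bigotimes_{i=1}^{n}(\Lambda(x_i,u_i),d)$, which is precisely a minimal Sullivan model of $\prod_{i=1}^{n}S^{|x_i|}$ (each $|x_i|$ being even). In particular $(\Lambda A,d)$ is pure, elliptic, and formal, and it is an $F_0$-model because $\dim A^{even}=\dim X=n=\dim U=\dim A^{odd}$, so $\chi_{\pi}(\Lambda A)=0$. By Theorem \ref{th4.1} this gives $\TC(\Lambda A)=2\cat(\Lambda A)+\chi_{\pi}(\Lambda A)=2\cat(\Lambda A)$, and since $H^{*}(\Lambda A)=\mathbb{Q}[x_1,\dots,x_n]/(x_1^2,\dots,x_n^2)$ has cuplength $n$, formality yields $\cat(\Lambda A)=\cl_{\mathbb{Q}}(H^{*}(\Lambda A))=n$. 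Hence $\TC(\Lambda A)=2n=2\dim V^{even}$.

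Next I would observe that coformality together with purity forces $dY\subset\Lambda^{2}X\subset\Lambda A$, so that $(\Lambda A,d)\hookrightarrow(\Lambda A\otimes\Lambda Y,d)=(\Lambda W_{\mathcal B},d)$ is a KS-extension (take $Y=Y(0)$) in which $Y=V^{odd}$ is concentrated in odd degrees and $d(Y)\subset\Lambda A$. Corollary \ref{corollary3.1}(i) then applies and gives
$$\TC(\Lambda W_{\mathcal B})\le\TC(\Lambda A)+\dim Y=2\dim V^{even}+\dim V^{odd}=\dim W_{\mathcal B},$$
which is the claim.

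Mechanically the argument is just bookkeeping once this filtration is chosen; the only point requiring genuine care is the verification of the hypotheses of Corollary \ref{corollary3.1} — that $Y$ lies in odd degrees (clear, $Y=V^{odd}$) and that $dY$ lands in $\Lambda A$ rather than merely in $\Lambda W_{\mathcal B}$, which is exactly where $dV\subset\Lambda^{2}V$ and $dV^{odd}\subset\Lambda V^{even}$ are jointly used to conclude $dY\subset\Lambda^{2}X$. A secondary, purely definitional, point is recognizing $(\Lambda A,d)$ as a formal $F_0$-model so that Theorem \ref{th4.1} is available; I do not expect a real obstacle here.
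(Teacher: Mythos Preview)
Your argument is correct and is essentially the paper's own proof: the paper simply cites Theorem~\ref{th4.2} applied to the extension $(\Lambda(X\oplus U),d)\hookrightarrow(\Lambda W_{\mathcal B},d)$, and the proof of Theorem~\ref{th4.2} is precisely the combination of Corollary~\ref{corollary3.1} with Theorem~\ref{th4.1} that you carry out explicitly. Your additional identification of $(\Lambda A,d)$ as the model of a product of even spheres and the direct computation $\cat(\Lambda A)=n$ are fine but not strictly needed once Theorem~\ref{th4.2} is invoked.
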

\begin{proof}
It suffices to consider the extension $(\Lambda(X\oplus U),d)\hookrightarrow (\Lambda W_{\mathcal B},d)$ and then the result follows by Theorem \ref{th4.2}. 
\end{proof}
	We will show later that this construction permits us to obtain a lower bound of the rational topological complexity of $(\Lambda V,d)$. In our calculations, it will be useful to reduce the algebra $\Lambda W _{\mathcal B}$ to a smaller algebra $A_{\mathcal B}$ through the surjective quasi-isomorphism 
	$$ \varphi: (\Lambda W _ {\mathcal B},d) \twoheadrightarrow \left( A_ {\mathcal B}=\frac{\Lambda (x_i)}{(x_i^2)} \otimes \Lambda Y,\bar{d} \right)$$
	defined by $\varphi(x)=x$, $ \varphi(y)=y$, $ \varphi(u)=0$, for $x\in X$, $y\in Y$ and $u\in U$.
	In the sequel we will simply write $\Lambda W$ (resp. $A$) instead of $\Lambda W_{\mathcal B}$ (resp. $A_ {\mathcal B}$) if no confusion arises.
	\subsection{The cocycle $\Omega$} \label{class}
	Recall that an elliptic space is a Poincaré duality space.
	In \cite{L.M}, Lechuga and Murillo gave an efficient method  to obtain an explicit expression of a cocycle representing the fundamental class of any pure elliptic model.
	We first use this process to construct a representing cocycle $\omega$ for the fundamental class of the elliptic extension $\Lambda W$ of $\Lambda V$ described above.
	Let $\bar{X}=sX$ be the suspension of $X$ with $d$ extended by $d \bar{x}=x$, $\forall x \in X$. Using the same notations as above we have  $W^{even}= X$ and $W ^{odd}=Y\oplus U$.
	
     We work in $\Lambda W \otimes \Lambda \bar{X}$. Let $\{y_1, \cdots , y_m\}$ be a basis of $Y$. Since $d$ is a quadratic differential, for each $j=1,\cdots, m$, we may express $d(y_j)$ as
	\begin{eqnarray*}
		d(y_j)&=& \sum _k  \alpha ^j _k x^2_k+ \sum _{p<q} \beta ^j _{p,q}x_{p} \cdot x_{q}\\
		&=& d\{ \sum _k  \alpha ^j _k x_k \cdot \bar{x}_k + \sum _{p<q} \beta ^j _{p,q}x_{p} \cdot \bar{x}_{q}\}
	\end{eqnarray*} 
	where $\alpha^j_k ,\beta^j_{p,q} \in \mathbb{Q}$. In addition \begin{eqnarray*}
		d(u_i)&=& x_i^2\\
		&=& d(x_i \cdot \bar{ x}_i).
	\end{eqnarray*}
	According to \cite{L.M}, $\omega$ can be obtained as the cofficient of $\prod _{i=1}^n \bar{x}_i$ after developing the product
	$$ \prod ^m _{j=1} (y_j -\sum _k  \alpha ^j _k x_k \cdot \bar{x}_k - \sum _{p<q} \beta ^j _{p,q}x_{p} \cdot \bar{x}_{q}) \cdot \prod _{i=1}^n(u_i - x_i \cdot \bar{x}_i).$$
	It is clear to see that
	$$\varphi( \omega)= (-1)^n x_1 \cdots x_n \cdot y_1 \cdots y_m.$$
	By setting $x_{[n]}=x_1 \cdots x_n $, $y_{[m]}= y_1 \cdots y_m$ and $\omega _A=(-1)^n \cdot \varphi( \omega)=x_{[n]} \cdot y_{[m]}$, we remark that $\omega _A$ denote the unique (up to a scalar) representing element for the fundamental class of the algebra $A$.
	
	By adapting the technique above, we shall construct an important cocycle  $\Omega \in (\ker \mu _{\Lambda W})^{m+n}$, which will be a key ingredient to obtain a lower bound of the $\TC(\Lambda V)$. More precisely, we consider a second copy $\Lambda W'$ of $\Lambda W$ and we consider $\Lambda W \otimes \Lambda W' \cong \Lambda(W \oplus W')$ writing $x'_i , u'_i$  and $ y'_j$ for the elements corresponding to $x_i , u_i$ and $y_j$. Working in $\Lambda W \otimes \bar{X} \otimes \Lambda W' \otimes \bar{X}'$, the odd generators of $\ker \mu_{\Lambda W }$, where $\mu_{\Lambda W }$ is the multiplication over $\Lambda W $, satisfy
	\begin{eqnarray*}
		d(u_i-u_i')&=& d[(x_i-x'_i)( \bar{x}_i+\bar{x}_i')],  \text{     for all  } i=1,\cdots ,n,
	\end{eqnarray*} 
	and
	\begin{eqnarray*}
		d(y_j-y_j')&=& \sum _k \alpha ^j _k(x^2_{k}- {x '}^2_{k}) + \sum _{p< q} \beta ^j _{p,q}(x_{p} x_{q} -x'_{p} x'_{q}) \\
		&=&d[ \sum _k \alpha ^j _k (x_{k}- {x '}_{k})(\bar{ x }_{k} + \bar{ x}'_{k} ) +\frac{1}{2} \sum _{p< q}  \beta ^j _{p,q}(x_{p}- {x '}_{p})(\bar{ x }_{q} + \bar{ x}'_{q} )\\
		&+& \frac{1}{2} \sum _{p< q}  \beta ^j _{p,q}(x_{q}- {x '}_{q})(\bar{ x }_{p} + \bar{ x}'_{p} )], \quad \text{for all} \quad j=1,\cdots ,m.
	\end{eqnarray*}
	By setting
	$$\phi _j=  \sum _k \alpha ^j _k (x_{k}- {x '}_{k})(\bar{ x }_{k} + \bar{ x}'_{k} ) +\frac{1}{2} \sum _{p< q}  \beta ^j _{p,q}(x_{p}- {x '}_{p})(\bar{ x }_{q} + \bar{ x}'_{q} )+\frac{1}{2} \sum _{p< q}  \beta ^j _{p,q}(x_{q}- {x '}_{q})(\bar{ x }_{p} + \bar{ x}'_{p} )$$
	we obtain that, the elements
	\begin{itemize}
		\item[•] $y_j -y'_j- \phi _j$, with $j=1, \cdots , m$
		\item[•] $u_i -u'_i- (x_i-x'_i)(\bar{x_i} + \bar{x'_i})$, with $i=1,\cdots , n$
	\end{itemize}
	are all cocycles in $\Lambda W   \otimes \Lambda \bar{X}\otimes \Lambda W '  \otimes \Lambda \bar{X}'$.\\ 
	We define $\Omega$ to be the coefficient of $ \prod _{i=1}^n( \bar{x}_i+\bar{x}_i')$ gotten after developing \\  
	$$\prod _{j=1}^m [y_j-y'_j -\phi _j] \cdot \prod _{i=1}^n[u_i-u'_i-(x_i-x'_i)( \bar{x}_i+\bar{x}'_i)].$$
	Remark that $$\Omega _A:=(-1)^n \cdot (\varphi \otimes \varphi)(\Omega)= \prod _{i=1}^n (x_i-x'_i)\cdot \prod_{j=1}^m (y_j-y_j').$$
	We will see through the following lemma, that $\Omega$ is a cocycle with non-zero cohomology class.
	\begin{lemma}
		$\Omega $ is a cocycle in $(\ker \mu _{\Lambda W })^{m+n}$ with non-zero cohomolgy class.
	\end{lemma}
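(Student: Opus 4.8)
The plan is to split the proof into its two assertions and handle them independently: that $\Omega$ is a $d$-cocycle lying in $(\ker\mu_{\Lambda W})^{m+n}$, and that $[\Omega]\neq 0$, the latter being deduced from the corresponding statement for $\Omega_A$ in $A\otimes A$ via the quasi-isomorphism $\varphi\otimes\varphi$.

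For the cocycle claim, the convenient first move is a change of basis among the $2n$ suspension variables: replace $\{\bar x_i,\bar x_i'\}$ by $\{a_i:=\bar x_i+\bar x_i',\ \bar x_i'\}$, so that $da_i=x_i+x_i'$. The key observation is that the correction terms $\phi_j$ and $(x_i-x_i')(\bar x_i+\bar x_i')$ involve only the $a_i$'s; hence the whole product
$$Q:=\prod_{j=1}^m(y_j-y_j'-\phi_j)\cdot\prod_{i=1}^n\bigl(u_i-u_i'-(x_i-x_i')a_i\bigr)$$
lies in $\Lambda(W\oplus W')\otimes\Lambda(a_1,\dots,a_n)$, and ``the coefficient of $\prod_i(\bar x_i+\bar x_i')$'' is precisely its coefficient $Q_{[n]}$ on the top monomial $a_1\cdots a_n$, which by definition is $\Omega$. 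Since each of the $m+n$ factors of $Q$ is a cocycle (as recorded just before the statement), $Q$ is a cocycle in this cdga; writing $Q=\sum_{T\subseteq\{1,\dots,n\}}Q_T\,a_T$ and reading off from $dQ=0$ the coefficient of $a_1\cdots a_n$ — to which nothing ``above the top'' can contribute, as $da_T$ has strictly lower $a$-degree — yields $dQ_{[n]}=0$, i.e.\ $d\Omega=0$. This is the same extraction device used after \cite{L.M} to produce $\omega$. To place $\Omega$ inside $(\ker\mu_{\Lambda W})^{m+n}$, note that after expansion each of the $m+n$ parenthesized factors contributes a summand of the form $z$ or $z\,a_k$ with $z\in\ker\mu_{\Lambda W}$ (the possible $z$'s being $y_j-y_j'$, $u_i-u_i'$, and the $x_k-x_k'$ occurring in $\phi_j$ and in $(x_i-x_i')a_i$); hence the $\Lambda W\otimes\Lambda W$ part of every monomial of $Q$, and in particular of $\Omega$, is a product of $m+n$ elements of the ideal $\ker\mu_{\Lambda W}$.

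For the non-triviality, since $\varphi$ is a surjective quasi-isomorphism so is $\varphi\otimes\varphi\colon\Lambda W\otimes\Lambda W\to A\otimes A$, and by Künneth $H^*(A\otimes A)\cong H^*(A)\otimes H^*(A)$; as $(\varphi\otimes\varphi)(\Omega)=(-1)^n\Omega_A$, it suffices to show $[\Omega_A]\neq 0$. The trick I would use is to multiply the cocycle $\Omega_A=\prod_i(x_i-x_i')\prod_j(y_j-y_j')$ by the cocycle $x'_{[n]}y'_{[m]}$ of $A\otimes A$ (the fundamental class of the second copy): since $(x_i')^2=0$ and $(y_j')^2=0$ in $A$, every factor $x_i-x_i'$ (resp.\ $y_j-y_j'$) contributes against it only through its unprimed part, so $\Omega_A\cdot x'_{[n]}y'_{[m]}=\pm\,x_{[n]}y_{[m]}x'_{[n]}y'_{[m]}$, which represents the top class $[\omega_A]\otimes[\omega_A]$ of $H^*(A\otimes A)$. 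Because $\Lambda W$ is elliptic, $H^*(A)\cong H^*(\Lambda W)$ satisfies Poincaré duality and $[\omega_A]=(-1)^n[\varphi(\omega)]$ generates its top, non-zero, class; hence $[\omega_A]\otimes[\omega_A]\neq 0$, so $[\Omega_A]\cdot[x'_{[n]}y'_{[m]}]\neq0$ and therefore $[\Omega_A]\neq 0$, whence $[\Omega]\neq 0$.

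The routine portions are the sign bookkeeping, both in the expansion of $Q$ and in the coefficient extraction, which I would suppress. The step carrying the real content is the multiplication trick of the last paragraph: pairing $\Omega_A$ with the fundamental class of one tensor factor collapses the alternating sum defining $\Omega_A$ to a single top-dimensional monomial, which is what forces $[\Omega]\neq 0$; as a by-product it shows that, up to the quasi-isomorphism $\varphi\otimes\varphi$, the product of the two fundamental classes lies in the ideal generated by $\Omega$, a fact I expect to reuse in the proof of Theorem~\ref{th5.1}. The more bureaucratic obstacle is simply ensuring that the coefficient extraction genuinely returns an element of $\Lambda W\otimes\Lambda W$ and inside the power $(\ker\mu_{\Lambda W})^{m+n}$; the change of basis to the $a_i$'s is what makes this transparent.
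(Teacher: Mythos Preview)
Your proof is correct and follows essentially the same approach as the paper: the paper likewise obtains $d\Omega=0$ by writing the product $Q$ in terms of the combinations $\bar x_i+\bar x'_i$ and reading off the top coefficient, and proves $[\Omega]\neq 0$ via the same multiplication $\Omega_A\cdot\omega'_A=\pm\,\omega_A\cdot\omega'_A$ (framed there as a contradiction). Your explicit change of basis to $a_i=\bar x_i+\bar x'_i$ is a cleaner way to package the coefficient extraction and the membership in $(\ker\mu_{\Lambda W})^{m+n}$, which the paper dismisses with ``by construction it is clear''.
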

	\begin{proof}
		By construction it is clear that  $\Omega \in (\ker \mu _{\Lambda W })^{n+m}$. Now, considering the decomposition 
		$$\Lambda W  \otimes \Lambda W'  \otimes \Lambda^{\leq n} ( \bar{X}\oplus \bar{X}')= \Lambda W  \otimes \Lambda W'  \otimes \Lambda ^n (\bar{X}\oplus \bar{X}') \oplus \Lambda W  \otimes \Lambda W'  \otimes \Lambda ^{<n} (\bar{X}\oplus \bar{X}'),$$ 
		we have
		\begin{equation*}
			\prod _{j=1}^m [y_j-y'_j -\phi _j] \cdot \prod _{i=1}^n [u_i-u'_i -(x _i -x' _i) (\bar{x} _i +\bar{x}' _i)] = \Omega  \cdot \prod _{i=1}^n(\bar{x} _i +\bar{x}' _i) + \sum _{0\leq i< n} \Theta _i\cdot \prod _{k=1}^i(\bar{x}_{i_k} + \bar{x } '_{i_k}),
		\end{equation*}
		where  $\Theta _i \in \Lambda W \otimes \Lambda W'$, for $i=1, \cdots , n-1$.\\ 
		Applying the differential $d$ to the equation above we get, after identification, $d \Omega =0$.\\
		To see that $[\Omega] \neq 0$, we proceed by contradiction and suppose that $[\Omega]=0$. Then $[\Omega _A]=0$. Let  $A'$ be a second copy of the algebra $A$. Recall that $\omega' _A=x'_{[n]}\cdot y'_{[m]}$ with $x'_{[n]}=x_1' \cdots x_n'$ and $y'_{[m]}=y'_1\cdots y'_m$  is the unique element (up to a scalar)  representing the fundamental class of $A'$. Since $[\Omega _A]=0$,
		we have \begin{equation} \label{equation2}
			[\Omega _A]\cdot [\omega' _A]=0.
		\end{equation}    
		On the other hand 
		\begin{eqnarray*}
			[\Omega _A]\cdot [\omega' _A]&=& [ \prod _{i=1}^n(x_i-x_i')\cdot \prod _{j=1}^m(y_j-y_j')]\cdot [x'_{[n]}\cdot y'_{[m]}]\\
			&=& [\prod _{i=1}^n(x_i-x_i')\cdot x'_{[n]}\cdot \prod _{j=1}^m(y_j-y_j') \cdot y'_{[m]}]\\
			&=&  [x_{[n]} x'_{[n]} y_{[m]} y'_{[m]}]\\
			&=& [\omega_A][\omega' _A]\\
			&\neq &
			 0,
		\end{eqnarray*}
		which contradicts (\ref{equation2}).
	\end{proof}
	\subsection{Lower bound of $\TC( \Lambda V	)$} \label{sec}
	As previously we consider a pure coformal model $(\Lambda V,d)$ with $\dim V<\infty$ and the elliptic extension
	$$ \Lambda W_ {\mathcal B } = \Lambda (X \oplus Y \oplus U),$$
	associated to the basis ${\mathcal B }= \{ x_1 , \cdots , x_n\}$ of $X=V^{even}$.
	We recall that $U$ is the graded vector space generated by $u_1, \cdots , u_n$, with $d u_i= x_i^2$, and we also fix a basis $\{ y_1,\cdots, y_m \}$ of $Y=V^{odd}$. Since $d$ is a quadratic differential, it splits into as the sum of the following operators 
	$$d_{p,q}:\Lambda ^p X \otimes \Lambda ^q(Y\oplus U) \rightarrow  \Lambda ^{p+2} X \otimes \Lambda ^{q-1}(Y\oplus U).
	$$
	We then obtain a (lower) bigradation on the cohomology of $\Lambda W$ given by  
	$$H_{p,q}(\Lambda W)= \frac{\ker (d_{p,q})}{Im (d_{p-2,q+1})}.$$
	Recall the quasi-isomorphism $$\varphi: \Lambda W _ {\mathcal B}= \Lambda (x_i, y_j, u_i) \rightarrow A= \frac{\Lambda (x_i)}{(x_i^2)} \otimes \Lambda (y_j).$$
	Denoting by $M_p(X)$ the vector space $\varphi (\Lambda^p X)$, the differential of $A$ splits into the sum of
	$$d_{p,q}: M_p(X) \otimes \Lambda ^q Y \longrightarrow M_{p+2}(X) \otimes \Lambda ^{q-1} Y
	$$
	and we obtain a lower bigradation of $H(A)$ given by
	$$H_{p,q}(A)= \frac{\ker (d_{p,q})}{Im (d_{p-2,q+1})}.$$
	This yields an isomorphism compatible with the bigradation structure
	$$H_{*,*}(\varphi): H_{*, *}(\Lambda W _ {\mathcal B }) \xrightarrow{\cong } H_{*,*}(A).$$
We then define the following cuplength which takes into account only the cohomology classes whose wordlength in $X=V^{even }$ is odd.
	$$
	L(\Lambda V,{\mathcal B }):= max\{ r: \exists \alpha _1 , \cdots , \alpha _r \in H_{odd, *}(A) \text{ such that } \alpha _1 \cdots \alpha _r \neq 0 \text{ in } H(A)\}.$$
	This gives a lower bound to the rational topological complexity through the following theorem.
	\begin{theorem} \label{th5.1}
		Let $(\Lambda V,d)$ be a pure coformal model with $\dim V <\infty$. For every basis ${\mathcal B}$ of $V^{even}$, we have
		$$dim (V^{odd})
		+ L(\Lambda V, {\mathcal B }) \leq \TC(\Lambda V).$$
	\end{theorem}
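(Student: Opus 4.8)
The plan is to estimate $\TC(\Lambda V)$ by combining the inequality $\HTC(\Lambda V)\le\TC(\Lambda V)$ with Corollary~\ref{corollary3.1}, applied to the elliptic extension $(\Lambda W_{\mathcal B},d)$. Since $(\Lambda V,d)\hookrightarrow(\Lambda W_{\mathcal B},d)$ is a KS-extension of the form $(\Lambda V,d)\hookrightarrow(\Lambda V\otimes\Lambda U,d)$, with $U=\langle u_1,\dots,u_n\rangle$ concentrated in odd degrees, $d(U)\subset\Lambda V$ and $n=\dim V^{even}$, Corollary~\ref{corollary3.1}(iii) gives $\HTC(\Lambda W_{\mathcal B})\le\HTC(\Lambda V)+n$ and hence
$$\TC(\Lambda V)\ \ge\ \HTC(\Lambda V)\ \ge\ \HTC(\Lambda W_{\mathcal B})-n .$$
Writing $m=\dim V^{odd}$ and $L=L(\Lambda V,{\mathcal B})$, it therefore suffices to prove $\HTC(\Lambda W_{\mathcal B})\ge m+n+L$; by the cohomological characterization of $\HTC$ recalled in Section~\ref{sec3}, this follows as soon as one exhibits a cocycle in $(\ker\mu_{\Lambda W_{\mathcal B}})^{\,m+n+L}$ whose class in $H^{*}(\Lambda W_{\mathcal B}\otimes\Lambda W_{\mathcal B})$ is non-zero.

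As such a cocycle I would take $\Xi:=\Omega\cdot\prod_{i=1}^{L}(a_i\otimes 1-1\otimes a_i)$, where $\Omega$ is the cocycle built above (by the previous lemma it lies in $(\ker\mu_{\Lambda W_{\mathcal B}})^{m+n}$ and has non-zero class) and the $a_i$ are constructed as follows. By definition of $L(\Lambda V,{\mathcal B})$ there are $\alpha_1,\dots,\alpha_L\in H_{odd,*}(A_{\mathcal B})$ with $\alpha_1\cdots\alpha_L\ne 0$; since $\varphi$ is a surjective quasi-isomorphism, a homogeneous cocycle of $A_{\mathcal B}$ representing $\alpha_i$ lifts along $\varphi$ to a cocycle $a_i\in\Lambda W_{\mathcal B}$ with $\varphi(a_i)$ again representing $\alpha_i$. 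Each $a_i\otimes 1-1\otimes a_i$ is a cocycle lying in $\ker\mu_{\Lambda W_{\mathcal B}}$, so $\Xi$ is a cocycle in $(\ker\mu_{\Lambda W_{\mathcal B}})^{\,m+n+L}$ and only the non-vanishing of $[\Xi]$ needs checking. As $\varphi\otimes\varphi$ is a quasi-isomorphism and $(\varphi\otimes\varphi)(\Xi)=(-1)^{n}\,\Omega_A\cdot\prod_{i=1}^{L}(\varphi(a_i)\otimes1-1\otimes\varphi(a_i))$, this reduces to proving
$$[\Omega_A]\cdot\prod_{i=1}^{L}[\bar\alpha_i]\ \ne\ 0\ \ \text{in}\ \ H^{*}(A_{\mathcal B}\otimes A_{\mathcal B}),\qquad \bar\alpha_i:=\alpha_i\otimes 1-1\otimes\alpha_i .$$

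For this I would exploit that $A_{\mathcal B}$ is a Poincaré duality algebra with fundamental class $[\omega_A]=[x_{[n]}y_{[m]}]$, that $\Omega_A=\prod_i(x_i-x_i')\prod_j(y_j-y_j')$, and that $[\Omega_A]\cdot[\omega'_A]=[\omega_A][\omega'_A]\ne 0$ is already known. The role of the restriction to $H_{odd,*}$ is that, inside $A_{\mathcal B}\otimes A_{\mathcal B}$, the factor $\prod_i(x_i-x_i')$ of $\Omega_A$ behaves like a diagonal class in the variables $x_i$: using $x_i^{2}=0$ one checks that for a monomial $\alpha=x_R y_P$ of word-length $r=|R|$ in $X=V^{even}$,
$$\prod_i(x_i-x_i')\cdot(\alpha\otimes1-1\otimes\alpha)\ =\ \bigl((-1)^{r}-1\bigr)\,(\pm)\,\prod_{k\in R}x_kx_k'\,\prod_{k\notin R}(x_k-x_k')\,(\cdots),$$
which vanishes for $r$ even and, for $r$ odd, turns the factors $x_k-x_k'$ with $k\in R$ into $x_kx_k'$ at the cost of a factor $-2$. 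Expanding $\Omega_A\cdot\prod_i\bar\alpha_i$ and using $x_i^{2}=y_j^{2}=0$, only the choices of monomials of $\alpha_1,\dots,\alpha_L$ with pairwise disjoint $X$- and $Y$-supports survive, and $(\varphi\otimes\varphi)(\Xi)$ becomes, up to the non-zero factor $\pm 2^{L}$, an explicit combination of elements
$$E_{R,P}\ =\ \prod_{k\in R}x_kx_k'\,\prod_{k\notin R}(x_k-x_k')\,\prod_{j\in P}y_jy_j'\,\prod_{j\notin P}(y_j-y_j')$$
whose coefficients coincide, up to universal signs, with those of a cocycle representing $\alpha_1\cdots\alpha_L$ in $A_{\mathcal B}$ (in particular not all of them vanish). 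Multiplying $E_{R,P}$ by the complementary monomial $x'_{[n]\setminus R}y'_{[m]\setminus P}$ of the second copy produces $\pm\,\omega_A\omega'_A$; from this one reads off, after multiplying $[\Omega_A]\cdot\prod_i[\bar\alpha_i]$ by a suitable class of $A_{\mathcal B}\otimes A_{\mathcal B}$ and invoking $\alpha_1\cdots\alpha_L\ne0$, that the result is a non-zero multiple of $[\omega_A][\omega'_A]$. Hence $[\Xi]\ne0$, so $\HTC(\Lambda W_{\mathcal B})\ge m+n+L$ and $\TC(\Lambda V)\ge m+L=\dim V^{odd}+L(\Lambda V,{\mathcal B})$.

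The step I expect to be the main obstacle is precisely this last one: keeping track of the signs and verifying that the cross-terms coming from the non-diagonal part of $\Omega_A$ (and from the fact that the individual $E_{R,P}$ are not themselves cocycles) do not cancel the leading contribution $\pm\,\omega_A\omega'_A$. The remainder — the reduction through $\HTC$ and Corollary~\ref{corollary3.1}, the passage to $A_{\mathcal B}$ via $\varphi\otimes\varphi$, and the bookkeeping of zero-divisor lengths — should be routine.
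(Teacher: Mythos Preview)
Your strategy is the paper's: produce a non-trivial class in $(\ker\mu_{\Lambda W_{\mathcal B}})^{m+n+L}$ of the form $\Omega\cdot\prod_k(a_k-a'_k)$, detect its non-vanishing in $A\otimes A'$ via $\varphi\otimes\varphi$, and then descend to $\Lambda V$ through Corollary~\ref{corollary3.1}. The only substantive difference is how the non-vanishing is established, and this is exactly the step you flag as the obstacle.

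The paper avoids your monomial expansion into the $E_{R,P}$ entirely by proving the clean \emph{identity}
\[
\Omega_A\cdot\prod_{k=1}^{r}(z_k-z'_k)\;=\;2^{r}\,\Omega_A\cdot\prod_{k=1}^{r} z_k
\]
in $A\otimes A'$ for cocycles $z_k$ of odd $X$-word-length (Lemma~\ref{lemma02}). The ingredients are precisely your observation that $\prod_i(x_i-x'_i)\cdot x_I=-x_Ix'_I\prod_{i\notin I}(x_i-x'_i)$ when $|I|$ is odd, together with the companion fact $\prod_j(y_j-y'_j)\cdot y_J=\prod_j(y_j-y'_j)\cdot y'_J$ (Lemma~\ref{lemma5.2}); once these are combined linearly the cross-terms you worry about simply are not there. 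With the identity in hand, one multiplies not by a monomial-dependent ``complementary class'' but by a single Poincar\'e-dual cocycle $\hat z\in A$ satisfying $(\prod_k z_k)\hat z=x_{[n]}y_{[m]}$, obtaining
\[
\Omega_A\cdot\prod_k(z_k-z'_k)\cdot\hat z\;=\;2^{r}\,\Omega_A\cdot\omega_A\;=\;\pm 2^{r}\,\omega_A\omega'_A\;\neq\;0.
\]
Lifting $\hat z$ to $\hat\alpha\in\Lambda W_{\mathcal B}$ then witnesses $[\Xi\cdot\hat\alpha]\neq0$, hence $[\Xi]\neq0$. (The paper uses part~(i) of Corollary~\ref{corollary3.1} where you use~(iii); either suffices.)
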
 
 If $(\Lambda V,d)$  is the minimal Sullivan model of $S$ then the invariant $L_0(S)$ mentioned in Theorem \ref{th1.3} is defined as $$L_0(S)=\sup\{ L(\Lambda V, {\mathcal B }): {\mathcal B } \text{ is a basis for } V^{even}\}$$
 and Theorem \ref{th1.3} of the introduction follows directly from Theorem \ref{th5.1} together with the fact that, for an elliptic coformal space $\cat_0(S)=\dim \pi_{odd}(S)\otimes \mathbb{Q}$. 
 
Before to begin proving our result, we need to set some lemmas, which will be the keys of the demonstration.\\
As before we consider $\Lambda W'$ a second copy of $\Lambda W$ as well as a second copy $A'$ of $A$.     
	\begin{lemma}\label{lemma5.2} Let $1\leq q\leq m$ and $1\leq j_1 < \cdots < j_q\leq m$. In both $\Lambda W \otimes \Lambda W'$ and $A\otimes A'$ we have
		$$ \prod _{j=1}^m(y_l -y'_l)  \cdot y_{j_1} \cdots y_{j_q}= \prod _{l=1}^m(y_l -y'_l) \cdot  y'_{j_1} \cdots y'_{j_q}.$$
	\end{lemma}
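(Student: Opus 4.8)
The plan is to prove the identity one variable at a time, using only that the $y_l$ and the $y_l'$ are odd. Set $z_l=y_l-y_l'$ for $1\le l\le m$ and $P=\prod_{l=1}^m z_l=\prod_{l=1}^m(y_l-y_l')$; each $z_l$ is then odd, and in particular $y_l^2=y_l'^2=0$. First I would record the single-index identity
\[
z_j\,y_j=(y_j-y_j')\,y_j=-\,y_j'\,y_j=y_j\,y_j'=(y_j-y_j')\,y_j'=z_j\,y_j',
\]
which uses $y_j^2=y_j'^2=0$ together with graded-commutativity of the odd elements $y_j,y_j'$.

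Next I would promote this to a ``localized'' statement. Fix $j$ and let $\xi$ be a homogeneous monomial in the generators $\{\,y_l,y_l':l\ne j\,\}$. Pulling the factor $z_j$ out of $P$, say $P=\varepsilon\,\bigl(\prod_{l\ne j}z_l\bigr)\,z_j$ with a sign $\varepsilon=\pm1$, and then commuting $z_j$ to the right past $\xi$, one gets
\[
P\,\xi\,y_j=\varepsilon\,(-1)^{|\xi|}\,\Bigl(\prod_{l\ne j}z_l\Bigr)\,\xi\,z_j\,y_j .
\]
By the single-index identity $z_j y_j=z_j y_j'$ the right-hand side is unchanged when $y_j$ is replaced by $y_j'$, and undoing the same manipulation shows that this new expression equals $P\,\xi\,y_j'$; hence $P\,\xi\,y_j=P\,\xi\,y_j'$. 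Sliding $y_j$ (resp. $y_j'$) to the right past a trailing monomial first, the same argument gives more generally $P\,\xi_1\,y_j\,\xi_2=P\,\xi_1\,y_j'\,\xi_2$ for homogeneous monomials $\xi_1,\xi_2$ in $\{\,y_l,y_l':l\ne j\,\}$. The key point here is that the prefactor $\varepsilon(-1)^{|\xi|}$ (and the sign from sliding $y_j$ past $\xi_2$) is the same in the $y_j$ and the $y_j'$ versions because $z_j$, $y_j$ and $y_j'$ are all odd.

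With this tool the lemma follows by an induction of length $q$: starting from $P\,y_{j_1}\cdots y_{j_q}$, apply the localized identity with $j=j_1$ — legitimate since, the indices being distinct, none of $y_{j_2},\dots,y_{j_q}$ equals $y_{j_1}$ or $y_{j_1}'$ — to replace $y_{j_1}$ by $y_{j_1}'$; then with $j=j_2$ to replace $y_{j_2}$ by $y_{j_2}'$; and so on, arriving at $P\,y_{j_1}'\cdots y_{j_q}'$. Since the whole argument only invokes graded-commutativity and the oddness of the $y_l,y_l'$, it applies verbatim in $\Lambda W\otimes\Lambda W'$ and in the quotient $A\otimes A'$. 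The one place that needs care — the only real obstacle in an otherwise mechanical verification — is the sign bookkeeping when $z_j$ is commuted past $\xi$ and when $y_j$ is slid past $\xi_2$; this is harmless precisely because every element being moved is odd, so the extra signs cancel in pairs.
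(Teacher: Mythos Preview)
Your proof is correct and follows essentially the same approach as the paper: both arguments rest on the single-index identity $(y_j-y_j')\,y_j=(y_j-y_j')\,y_j'$, isolate the relevant factor $z_j$ inside $P$ by commuting odd elements past one another, and then proceed by induction on $q$ to prime one $y_{j_k}$ at a time. Your packaging of the commutation step as a reusable ``localized'' identity $P\,\xi_1\,y_j\,\xi_2=P\,\xi_1\,y_j'\,\xi_2$ is slightly more modular than the paper's direct computation, but the underlying mechanics and sign bookkeeping are identical.
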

	\begin{proof}
		First, we have
		 
		\begin{eqnarray*}
			\prod _{l=1} ^m (y_l-y'_l) \cdot y_{j_1}
			& = &\prod_ {l=1} ^{j_1-1} (y_l-y'_l) \cdot(y_{j_1}-y'_{j_1}) \cdot \prod _{l={j_1+1}}^m (y_l-y'_l) \cdot y_{j_1}\\
			& =&\,(-1)^{\varepsilon} \cdot \prod_ {l=1} ^{j_1-1} (y_l-y'_l) \cdot(y_{j_1}-y'_{j_1}) \cdot y_{j_1} \cdot \prod _{l={j_1+1}}^m (y_l-y'_l).
		\end{eqnarray*}
	where $\varepsilon=|\prod _{l={j_1+1}}^m (y_l-y'_l) ||y_{j_1}|$.
		As $|y_{j_1}|$ is odd, $(y_{j_1}-y'_{j_1}) \cdot y_{j_1} =(y_{j_1}-y'_{j_1}) \cdot y'_{j_1}$ and it follows that 
		\begin{eqnarray*}
			\prod _{l=1} ^m (y_l-y'_l) \cdot y_{j_1}&=&(-1)^{\varepsilon} \cdot \prod_ {l=1} ^{j_1-1} (y_l-y'_l) \cdot (y_{j_1}-y'_{j_1}) \cdot y'_{j_1} \cdot \prod _{l={j_1+1}}^m (y_l-y'_l)\\
			&=&\prod_ {l=1} ^{j_1-1} (y_l-y'_l) \cdot (y_{j_1}-y'_{j_1}) \cdot \prod _{l={j_1+1}}^m (y_l-y'_l) \cdot y'_{j_1}\\
			&=&\prod _{l=1} ^m (y_l-y'_l) y'_{j_1}.
		\end{eqnarray*}
		Hence, the result is true for $y_{j_1}$.\\
		Next, by induction suppose that the equality is satisfied up to the rank $q-1$, which means
		$$\prod _{l=1}^m(y_l -y'_l)\cdot y_{j_1} \cdots y_{j_{q-1}}= \prod _{l=1}^m(y_l -y'_l)\cdot y'_{j_1} \cdots y'_{j_{q-1}}.$$ 
		Thus, we get
		\begin{eqnarray*}
			\prod _{l=1}^m(y_l -y'_l)\cdot y_{j_1} \cdots y_{j_{q}}&=& (\prod _{l=1}^m(y_l -y'_l) \cdot y_{j_1} \cdots y_{j_{q-1}}) \cdot y_{j_q}\\
			&=& ( \prod _{l=1}^m(y_l -y'_l) \cdot y'_{j_1} \cdots y'_{j_{q-1}}) \cdot y_{j_q}\\
			&=& (-1)^{ |\prod _{l=1}^m(y_l -y'_l)|| y'_{j_1} \cdots y'_{j_{q-1}}|  }\cdot y'_{j_1} \cdots y'_{j_{q-1}}  \cdot ( \prod _{l=1}^m(y_l -y'_l) \cdot y_{j_q}).
		\end{eqnarray*}
		By the same calculation as in the first step, we have $$\prod _{l=1}^m(y_l -y'_l) \cdot y_{j_q}= \prod _{l=1}^m(y_l -y'_l) \cdot y'_{j_q}.$$ Then
		\begin{eqnarray*}
			\prod _{l=1}^m(y_l -y'_l)\cdot y_{j_1} \cdots y_{j_{q}}&=& (-1)^{ |\prod _{l=1}^m(y_l -y'_l)|| y'_{j_1} \cdots y'_{j_{q-1}}|  }  \cdot y'_{j_1} \cdots y'_{j_{q-1}} \cdot ( \prod _{l=1}^m(y_l -y'_l) \cdot y'_{j_q})\\
			&=&\prod _{l=1}^m(y_l -y'_l)\cdot  y'_{j_1} \cdots y'_{j_{q}}.
		\end{eqnarray*}
	\end{proof} 
	\begin{lemma} \label{lemma02} Let $z_1, \cdots, z_r \in A$ such that for any $1\leq i\leq n ,$ $z_i \in M _{2s_i +1}(X) \otimes \Lambda Y$ with $s_i \in \mathbb{N}$. We have 
		$$ \Omega _A  \cdot (z_1-z'_1) \cdots (z_r-z'_r)=2^r \cdot \Omega _A \cdot z_1 \cdots z_r
		$$
		where $\Omega _A= (-1)^n \cdot (\varphi \otimes \varphi )(\Omega).$
	\end{lemma}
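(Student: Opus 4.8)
The plan is to reduce, by multilinearity, to the case in which each $z_i$ is a monomial $x_{I_i}\,y_{J_i}$, where $x_{I_i}$ is a squarefree monomial in the even generators of (odd) length $|I_i|=2s_i+1$ and $y_{J_i}$ is a squarefree monomial in the odd generators, and then to run an induction on $r$. All computations are carried out in $A\otimes A'$, where we recall that $\Omega_A=\prod_{i=1}^n(x_i-x_i')\cdot\prod_{j=1}^m(y_j-y_j')$.

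The first step is an analogue of Lemma~\ref{lemma5.2} for the even generators, now carrying a sign: for every squarefree monomial $x_I$ one has
\[
\prod_{i=1}^n(x_i-x_i')\cdot x_I=(-1)^{|I|}\,\prod_{i=1}^n(x_i-x_i')\cdot x_I'
\]
in $A\otimes A'$. I would prove this by bringing, for each $i\in I$, the (even, hence central) factor $x_i-x_i'$ next to $x_i$ and invoking the identity $(x_i-x_i')x_i=x_i^2-x_i'x_i=-x_i x_i'=-(x_i-x_i')x_i'$, which holds precisely because $x_i^2=0$ in $A$; thus each of the $|I|$ generators occurring in $x_I$ contributes one sign. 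Combining this with Lemma~\ref{lemma5.2} (which supplies $\prod_j(y_j-y_j')\,y_J=\prod_j(y_j-y_j')\,y_J'$ with \emph{no} sign) and using once more that $x_I$ is central, I obtain, for a monomial $z=x_I y_J$ with $|I|$ odd,
\[
\Omega_A\cdot z=(-1)^{|I|}\,\Omega_A\cdot z'=-\,\Omega_A\cdot z',
\]
and hence, by multilinearity, $\Omega_A\cdot z'=-\,\Omega_A\cdot z$ for every $z\in\bigoplus_{s}M_{2s+1}(X)\otimes\Lambda Y$. Taking the difference gives $\Omega_A\cdot(z-z')=2\,\Omega_A\cdot z$, which settles the case $r=1$.

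For the inductive step I would first upgrade this to: for every $c\in A$ and every $z\in\bigoplus_s M_{2s+1}(X)\otimes\Lambda Y$, $\Omega_A\cdot c\cdot z'=-\,\Omega_A\cdot c\cdot z$ in $A\otimes A'$. This follows at once from $\Omega_A\,z'=-\,\Omega_A\,z$ together with the Koszul sign rule: carrying $z'\in A'$ past $c\in A$ costs the sign $(-1)^{|c|\,|z|}$, and carrying $z\in A$ back past $c$ costs the same sign, so the two cancel. Then I process $\Omega_A\,(z_1-z_1')\cdots(z_r-z_r')$ from the left: expanding the first factor and using $\Omega_A\,z_1'=-\,\Omega_A\,z_1$ (as an identity of elements of $A\otimes A'$, which is then multiplied on the right by the remaining factors) yields $2\,\Omega_A\,z_1\,(z_2-z_2')\cdots(z_r-z_r')$; expanding the next factor and applying the upgraded identity with $c=z_1$ yields $2^2\,\Omega_A\,z_1 z_2\,(z_3-z_3')\cdots(z_r-z_r')$; iterating, with $c=z_1\cdots z_{k-1}\in A$ at the $k$-th step, one arrives after $r$ steps at $2^r\,\Omega_A\,z_1\cdots z_r$, as required.

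The only genuine difficulty is the sign bookkeeping, and in particular the asymmetry between the two families of generators: for the odd generators the relation $(y_j-y_j')y_j=(y_j-y_j')y_j'$ is sign-free (Lemma~\ref{lemma5.2}), whereas for the even generators $(x_i-x_i')x_i=-(x_i-x_i')x_i'$ carries a sign. It is exactly this sign, accumulated $|I_i|$ times in the $i$-th factor, that makes the ``odd wordlength in $X$'' hypothesis indispensable and produces the factor $2^r$ rather than $0$; everything else (centrality of the even generators, the Koszul signs in $A\otimes A'$) is routine.
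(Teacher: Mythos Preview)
Your proof is correct and follows essentially the same approach as the paper: both reduce by linearity to monomials $x_Iy_J$, establish the base case $\Omega_A\cdot(z-z')=2\,\Omega_A\cdot z$ by combining the sign identity $(x_i-x_i')x_i=-(x_i-x_i')x_i'$ (accumulated $|I|$ times, $|I|$ odd) with Lemma~\ref{lemma5.2} for the $y$-part, and then run an induction on $r$ using graded commutativity in $A\otimes A'$. The only cosmetic difference is that in the inductive step the paper commutes the product $z_1\cdots z_{r-1}$ past $\Omega_A$ and applies the base case to $\Omega_A\cdot(z_r-z_r')$, whereas you commute $z_r'$ past $c=z_1\cdots z_{r-1}$ and invoke the identity $\Omega_A\,z'=-\Omega_A\,z$; the sign bookkeeping is the same.
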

	\begin{proof}
		An element $z \in M_{2s+1}(X)\otimes \Lambda Y$ can be written as 
		$$\sum _{I,J} \alpha_{I,J} x_I \cdot y_J$$
		where $\alpha _{I,J} \in \mathbb{Q}$, $I=\{ 1\leq i_1< i_2< \cdots < i_{2p+1}\leq n \}$, $J=\{1 \leq j_1 < j_2 \cdots < j_q \leq m \}$, $x_I:= x_{i_1} \cdots x_{i_{2p+1}}$ and $y_J:= y_{j_1} \cdots y_{j_q}$. By setting $\hat{I}=\{1, \cdots ,n\} \setminus I$, and using the oddness of $card(I)$ we have
		\begin{eqnarray*}
			\Omega _A  \cdot z&=& \underset{I,J}{\sum} \alpha_{I,J} \prod _{k=1}^n(x_k-x'_k) \cdot \prod _{l=1}^m(y_l-y'_l) \cdot x_I \cdot y_J \\
			&=&-\underset{I,J}{\sum} \alpha_{I,J} \cdot x_I \cdot x'_I \prod _{k \in\hat{I}}(x_k-x'_k) \cdot \prod _{l=1}^m(y_l-y'_l) \cdot y_J. 
		\end{eqnarray*}
		Analogously, and using Lemma \ref{lemma5.2}, we get 
		\begin{eqnarray*}
			\Omega _A \cdot z'&=& \sum _{I,J} \alpha_{I,J} \prod _{k=1}^n(x_k-x'_k) \cdot \prod _{l=1}^m(y_l-y'_l) \cdot x'_I \cdot y'_J \\
			&=&  \sum _{I,J} \alpha_{I,J} \cdot x_I \cdot x'_I \prod _{k \in\hat{I}}(x_k-x'_k) \cdot \prod _{l=1}^m(y_l-y'_l) \cdot y'_J\\
			&=&  \sum _{I,J} \alpha_{I,J} \cdot x_I \cdot x'_I \prod _{k \in\hat{I}}(x_k-x'_k) \cdot \prod _{l=1}^m(y_l-y'_l) \cdot y_J. 
		\end{eqnarray*}
		Therefore 
		\begin{eqnarray} \label{eq3.4}
			\Omega _A \cdot (z-z')&=&-2\sum _{I,J} \alpha_{I,J} \cdot x_I \cdot x'_I \prod _{k \in\hat{I}}(x_k-x'_k) \cdot \prod _{l=1}^m(y_l-y'_l) \cdot y_J\nonumber\\
			&=& 2 \Omega _A \cdot z.
		\end{eqnarray}
		By induction, we suppose that
		$$\Omega _A  \cdot \prod _{k=1}^{r-1}(z_k-z'_k)= 2^{r-1}\Omega _A  \cdot \prod _{k=1}^{r-1}z_k$$
		and we have
		\begin{eqnarray*}
			\Omega _A  \cdot \prod _{k=1}^{r}(z_k-z'_k)&=&\Omega _A  \cdot \prod _{k=1}^{r-1}(z_k-z'_k)\cdot (z_r-z'_r) \\
			&=&2^{r-1}\Omega _A \cdot \prod _{k=1}^{r-1}z_k \cdot (z_r-z'_r)\\
			&=& 2^{r-1} (-1)^{|\Omega _A ||\prod _{k=1}^{r-1}z_k|} \cdot \prod _{k=1}^{r-1} z_k \cdot \Omega _A \cdot (z_r-z'_r).
		\end{eqnarray*}
		By (\ref{eq3.4}), $\Omega _A \cdot ( z_r -z'_r)=2 \cdot \Omega _A \cdot z_r$, which implies
		\begin{eqnarray*}
			\Omega _A  \cdot \prod _{k=1}^{r}(z_k-z'_k)&=& 2^{r-1} (-1)^{|\Omega _A ||\prod _{k=1}^{r-1}z_k|} \cdot \prod _{k=1}^{r-1}z_k \cdot (2)\Omega _A \cdot z_r\\
			&=& 2^r  \cdot  \Omega _A  \cdot \prod _{k=1}^{r}z_k.
		\end{eqnarray*}
	\end{proof}
	\begin{proof}[ Proof of Theorem \ref{th5.1}]
Suppose $L(\Lambda V, {\mathcal B})=r$. Using the notations introduced before as well as the cocycle $\Omega$ defined in Section \ref{class}, we first construct a cocycle in $(\ker \mu_{\Lambda W _ {\mathcal B}})^{m+n+r}$ whose cohomology class is non-zero. This will permit us to see that $\HTC(\Lambda W_{\mathcal B}) \geq m+n+r$. Since $L(\Lambda V,{\mathcal B})=r$ there exists $r$ cohomology classes, $[z_1] , \cdots , [z_r] \in H _{odd, *}(A)$ such that $[z_1]  \cdots  [z_r] \neq 0.$ Recall that the bigradation of $H(A)$ comes from the splitting of $d$ as the sum of the operators 
$$d_{p,q}: M_{p} (A) \otimes \Lambda ^{q}Y \rightarrow  M_{p+2} (A) \otimes \Lambda ^{q-1}Y.
$$
We can then assume that each cocycle $z_i \in M_{2k_i+1, h_i} (X) \otimes \Lambda ^{h_i} Y$, $\forall i =1,\cdots , r$. It follows that the elements $z_1, \cdots ,z_r$ satisfy the conditions of Lemma \ref{lemma02}. In addition, the element $z= \prod _{k=1} ^r z_k$ is also a cocyle in $A$, and $x _{[n]}\cdot y _{[m]}$ is the unique cocycle (up to a scalar) in $A$ representing the fundamental class of $H^*(A)$. Then, by Poincaré duality in $H^*(A),$ there is another cocycle $\hat{z} \in A$ satisfying  $z\cdot \hat{z} =x_{[n]}\cdot y_{[m]}$. Now, using the Lemma \ref{lemma02} we obtain that
\begin{eqnarray*}
\Omega _A \cdot \prod _{k=1}^r (z_k-z'_k) \cdot \hat{z}&=& 2^r \cdot \Omega _A  \cdot \prod _{k=1}^rz_k \cdot \hat{z}\\
&=& 2^r \Omega_A \cdot x_{[n]}\cdot y_{[m]}\\
&=&  2^r \cdot (-1)^{n+m} x'_{[n]} \cdot y'_{[m]} \cdot x_{[n]}\cdot y_{[m]}\\
&\neq& 0.
\end{eqnarray*}
Since $\varphi$ is a quasi-isomorphism, there exists for each $i$, $[\alpha _i] \in H^*(\Lambda W_{\mathcal B}, d)$ such that $[\varphi (\alpha _i)]= [z_i]$, as well as a cocycle $\hat{\alpha} \in \Lambda W$, such that $[\varphi (\hat{\alpha} )]= [\hat{z}]$. We therefore obtain
		\begin{eqnarray*}
			(\varphi \otimes \varphi) ( \Omega  \cdot \prod _{k=1}^r (\alpha _k-\alpha ' _k) \cdot \hat{\alpha})&=& \Omega _A  \cdot \prod _{k=1}^r (z_k-z'_k)\cdot \hat{z}\\
			&\neq & 0.
		\end{eqnarray*}
		Now, we can see clearly that $ \Omega  \cdot \prod _{k=1}^r (\alpha _k-\alpha ' _k) \cdot \hat{\alpha}$ is a cocycle in $(\ker \mu _{\Lambda W _ {\mathcal B}})^{m+n+r}$, satisfying 
		\begin{eqnarray*}
			[\Omega  \cdot \prod _{k=1}^r (\alpha _k-\alpha ' _k) \cdot \hat{\alpha}]&\neq & 0.
		\end{eqnarray*}
		We then have $$ m+n + L(\Lambda V, {\mathcal B}) \leq \HTC(\Lambda W _ {\mathcal B} ,d) \leq \TC(\Lambda W _ {\mathcal B} ,d). $$
		On the other hand it follows from Corollary \ref{corollary3.1}, that
		$$\TC(\Lambda W _{\mathcal B} )\leq \TC(\Lambda V) +n.$$
		We finally obtain
		$$dim (V^{odd}) +L(\Lambda V,{\mathcal B})\leq \TC(\Lambda V).$$
	\end{proof}

\begin{example}{\rm \label{example1}
We consider a Sullivan model of the form 
$$(\Lambda V,d)=(\Lambda (x_1,x_2,y_1,y_2,y_3),d)$$
 where $|x_1|$ and $ |x_2|$ are even and the differential is given by $dx_1=dx_2=0$, $dy_1=x_1^2$, $dy_1=x_1^2$ and $dy_3=x_1x_2$. For instance the homogeneous space $\frac{SU(6)}{SU(3)\times SU(3)}$ admits such a model with $|x_1|=4$ and $|x_2|=6$.\\ 
Since we have an extension $(\Lambda Z,d) \hookrightarrow (\Lambda V,d)$ where $(\Lambda Z,d)=(\Lambda (x_1,x_2, y_1,y_2),d)$ is an $F_0$-model and $\chi_{\pi}(\Lambda V)=-1$ we have by Theorem \ref{th4.2}  $\TC(\Lambda V)\leq 5$. We have $\cat(\Lambda V)= \dim V^{odd}=3$  and we can check that $\zcl_{\mathbb{Q}} (H(\Lambda V))=3$. We now compute $L(\Lambda V, {\mathcal B})$ where $ {\mathcal B}=\{ x_1, x_2\}$. The extension $\Lambda W_ {\mathcal B}$ is given by $\Lambda W_ {\mathcal B}= (\Lambda V\otimes \Lambda U,d)$ where $U=<u_1,u_2>$ with $du_1=x_1^2$  and $du_2=x_2^2$. We consider the quasi-isomorphism $$\varphi : (\Lambda W_{\mathcal B},d)=(\Lambda V \otimes \Lambda U,d)\rightarrow (A, \bar{d})= \left(\frac{\Lambda (x_i)}{(x_i^2)} \otimes \Lambda (y_1, y_2,y_3), \bar{d}\right)$$
and note that $\bar{d} y_1= \bar{d} y_2=0$ and $\bar{d} y_3=x_1x_2$. The elements of $H_{odd, *}(A)$ corresponds to the cocycles of the form 
\begin{center}
$x_1 \gamma, \quad x_2  \gamma,  \quad x_1 y_3 \gamma, \quad  x_2 y_3 \gamma$ where $\gamma \in \Lambda(y_1,y_2)$
\end{center}
The product of $z_1=x_1$ and $z_2=x_2 y_3$ gives a non-trivial class of $H(A)$ so that $L(\Lambda V, {\mathcal B}) \geq 2$. Since $\cat (\Lambda V)=\dim V^{odd}=3$ we can then conclude by Theorem \ref{th5.1} that $\TC(\Lambda V) \geq 5$ and therefore $\TC(\Lambda V) =\dim V= 5.$ In particular, as indicated in the introduction $\TC_0(\frac{SU(6)}{SU(3)\times SU(3)} )=5$.
}
\end{example}
\begin{remark} \label{ramark1} 
{\rm In the example above we clearly have $$A=\hat{A} \otimes \left(\Lambda(y_1, y_2),0) \right) \quad \text{ where } \hat{A}=\left(\frac{\Lambda (x_i)}{(x_i^2)} \otimes \Lambda y_3, \bar{d}\right)$$  
and $H_{odd, *}(A)\cong H_{odd, *}(\hat{A})\otimes \Lambda(y_1,y_2)$. Consequently the length $L(\Lambda V, {\mathcal B})$ can be calculated  by considering only the element of $H_{odd, *}(\hat{A} )$.}
\end{remark}
\begin{example}
	{\rm
 We consider $(\Lambda V,d)$ given by
 $$ \Lambda V=\Lambda (x_1,x_2,x_3,x_4, y_1,y_2,y_3,y_4,y_5)$$
with $dy_i=x_i^2$ for $i=1, \cdots, 4$ and $dy_5=x_1x_2-x_3x_4.$\\
 As in Example \ref{example1} we can write $\Lambda V$ as an extension $\Lambda Z \otimes  \Lambda y_5$ where $\Lambda Z$ is an $F_0$-model and we have $\TC(\Lambda V) \leq \dim V= 9.$\\
Considering the extension $\Lambda W_{\mathcal B}=\Lambda V \otimes \Lambda U$ associated to ${\mathcal B}=\{x_1, \cdots, x_4\}$ and the observation made in Remark \ref{ramark1} we calculate $L (\Lambda V, {\mathcal B})$ by considering the algebra $\hat{A}= \left(\frac{\Lambda ( x_i)}{(x_i^2)} \otimes \Lambda y_5, \bar{d}\right)$ where $\bar{d} y_5=x_1x_2-x_3x_4$. Note that the fundamental class of the algebra is given by the cocycle $x_1x_2x_3x_4y_5$. We can see that the generators of $H_{odd,*}(\hat{A})$ correspond to the cocycles 
$$x_1,\cdots , x_4, \quad x_1x_2x_3y_5 ,\quad x_1x_2x_4y_5, \quad x_1x_3x_4y_5  , \quad x_2x_3x_4y_5.$$
The maximal non-trivial products of the corresponding cohomology classes are obtained from either products of the form $x_ix_j$ with $i\neq j$ or products of two cocycles resulting in the fundamental cocycle $x_1x_2x_3x_4y_5$. We then obtain that $L(\Lambda V,{\mathcal B})=2$ and the inequality of Theorem \ref{th5.1} gives us $\TC(\Lambda V) \geq 7$, while as mentioned above $\TC(\Lambda V) \leq 9$.

It then appears that the approach through Theorem \ref{th5.1} is not sufficient to completely determine the topological complexity of this example. However, the more specific calculations we do in the next section will permit us to see that in this particular case the actual value is $9$ (see  Theorem \ref{th5.3}). }
\end{example}
\subsection{Special families}
In this section, we are going to determine the rational topological complexity of some families of spaces each of which can be seen as the elliptic extension of a (non-necessarily elliptic) pure coformal space. More exactly we consider elliptic spaces of the form $\Lambda W=(\Lambda(x_i, u_i, y_j),d)$, $du_i=x_i^2$ and $dy_j \in \Lambda (x_i)$ where $i=1, \cdots , n$ and $j=1,\cdots ,m$. Such a space can be seen as the elliptic extension of the (non-necessarly elliptic) model $\Lambda(x_i, y_j)$.
Since the differential is quadratic and $du_i=x_i^2$, through a change of variables we can always suppose that $dy_j = \sum \limits _{i<j} \alpha_{i,j} x_ix_j, \quad \alpha_{i,j} \in \mathbb{Q}$. We will use the construction of the cocycle $\Omega$ as well as the quasi-isomorphism
$$ \varphi: (\Lambda(x_i, u_i, y_j),d) \twoheadrightarrow (A,d)=\left(\frac{\Lambda (x_i)}{(x_i^2)} \otimes \Lambda (y_j), \bar{d}\right),$$
given by $\varphi (x_i)=x_i$, $\varphi (y_j)=y_j$ and $\varphi (u_i)=0$.\\
Recall that $\Lambda W'$ (resp. $A'$) denotes a second copy of $\Lambda W$ (resp. $A$).
\subsubsection{The family $\Lambda W=\Lambda(x_i, u_i,y)$ (case $m=1$)}

As described in Section \ref{class}, let consider the cocycle $\Omega$ associated to the particular case $\Lambda W=\Lambda(x_i, u_i,y)$. In order to find a lower bound for $\TC(\Lambda W)$, we are going to construct a cocycle $\beta \in (\ker \mu _{ \Lambda W})^{n}$ such that
$\Omega \cdot \beta$ represents exactly the fundamental class of $\Lambda W \otimes \Lambda W$. Through the following lemma we assert the existence of such a cocyle $\beta$.
\begin{lemma} \label{lemma5.4}
Suppose that $\Lambda W=\Lambda(x_i,u_i,y)$ where $1\leq i \leq n$. There exists $\gamma \in (\ker \mu_{\Lambda W})^n$ such that $\beta:=\prod \limits_{i=1}^n(x_i-x_i') \cdot y'-\gamma \in (\ker \mu_{\Lambda W})^n$ is a cocycle and $$(\varphi \otimes \varphi)(\gamma)=-\frac{1}{2} \sum \limits _{l=1}^n \prod \limits _{\stackrel{i=1}{i\neq l} }^n(x_i-x_i')(y-y').$$
\end{lemma}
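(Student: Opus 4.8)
The plan is to produce $\gamma$ as a sum $\gamma=\gamma_0+\gamma_1$, where
$$\gamma_0:=-\tfrac12\sum_{l=1}^n\prod_{i\neq l}(x_i-x_i')(y-y')$$
is a ``leading term'' and $\gamma_1$ is a correction lying in the ideal of $\Lambda W\otimes\Lambda W'$ generated by the $u_i$ and $u_i'$. The term $\gamma_0$ is manifestly in $(\ker\mu_{\Lambda W})^n$ (each summand is a product of the $n-1$ zero-divisors $x_i-x_i'$, $i\neq l$, with the zero-divisor $y-y'$), and since it involves no $u$'s we get $(\varphi\otimes\varphi)(\gamma_0)=-\tfrac12\sum_l\prod_{i\neq l}(x_i-x_i')(y-y')$, exactly the value prescribed for $(\varphi\otimes\varphi)(\gamma)$. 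As $\varphi(u_i)=0=\varphi(u_i')$, any $\gamma_1$ in the ideal generated by the $u$'s automatically satisfies $(\varphi\otimes\varphi)(\gamma_1)=0$, so the only remaining requirements are that $\gamma_1\in(\ker\mu_{\Lambda W})^n$ and that $\beta:=\prod_i(x_i-x_i')\,y'-\gamma_0-\gamma_1$ be a cocycle. Since $d\bigl(\prod_i(x_i-x_i')\,y'\bigr)=\prod_i(x_i-x_i')\,dy'$ (the factor $\prod_i(x_i-x_i')$ having even degree), this last condition reads $d\gamma_1=\Delta$, where
\[
\Delta:=\prod_{i}(x_i-x_i')\,dy'-d\gamma_0=\prod_{i}(x_i-x_i')\,dy'+\tfrac12\sum_{l=1}^n\prod_{i\neq l}(x_i-x_i')\,(dy-dy').
\]
After the change of variables made at the start of the subsection, $dy=\sum_{p<q}\alpha_{pq}x_px_q$ and $dy'=\sum_{p<q}\alpha_{pq}x_p'x_q'$ contain only cross terms, so $\Delta$ lies in $\Lambda(x_i)\otimes\Lambda(x_i')$.

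The heart of the proof is then to exhibit a primitive $\gamma_1$ of $\Delta$ inside $(\ker\mu_{\Lambda W})^n$ with every monomial divisible by some $u_i$ or $u_i'$. I would feed in the following elementary identities: for each $l$, $\prod_i(x_i-x_i')=(x_l-x_l')\prod_{k\neq l}(x_k-x_k')$, together with $d(u_i-u_i')=x_i^2-x_i'^2=(x_i-x_i')(x_i+x_i')$, which gives $d\bigl[(u_i-u_i')\prod_{k\neq i}(x_k-x_k')\bigr]=(x_i+x_i')\prod_k(x_k-x_k')$; and the ``halving'' identities $(x_i-x_i')x_i'=\tfrac12\bigl(d(u_i-u_i')-(x_i-x_i')^2\bigr)$ and $(x_i-x_i')x_i=\tfrac12\bigl(d(u_i-u_i')+(x_i-x_i')^2\bigr)$. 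Applying the halving identity twice, each summand $\prod_i(x_i-x_i')\,x_p'x_q'$ of $\prod_i(x_i-x_i')\,dy'$ rewrites as $\tfrac14\bigl(d(u_p-u_p')-(x_p-x_p')^2\bigr)\bigl(d(u_q-u_q')-(x_q-x_q')^2\bigr)\prod_{k\neq p,q}(x_k-x_k')$; three of the four resulting products are coboundaries of elements of the shape $(u_p-u_p')\,\sigma$ or $(u_p-u_p')(u_q-u_q')\,\sigma$ with $\sigma$ a product of factors $(x_k-x_k')$, which I will check lie in $(\ker\mu_{\Lambda W})^n$, and the fourth, non-exact piece $\tfrac14(x_p-x_p')^2(x_q-x_q')^2\prod_{k\neq p,q}(x_k-x_k')$ is designed to cancel against the corresponding contributions coming from $\tfrac12\sum_l\prod_{i\neq l}(x_i-x_i')(dy-dy')$ once those are expanded by the same halving identities and collected over $l$ and over the pairs $p<q$. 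A cleaner alternative to write up may be induction on $n$: for $n=1$ one has $dy=0$, so $(x_1-x_1')y'$ and $-\tfrac12(y-y')$ are already cocycles and $\gamma=-\tfrac12(y-y')$ works with no correction; for the step one splits off $(x_n,u_n)$, writes $dy=P+Qx_n$ with $P\in\Lambda(x_1,\dots,x_{n-1})$, $Q\in\Lambda^1(x_1,\dots,x_{n-1})$, and reduces to the $(n-1)$-variable statement plus one round of correction using $u_n-u_n'$.

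The main obstacle, in either route, is the simultaneous control of two things. First, the elementary coboundary $x_i^2-x_i'^2=(x_i-x_i')(x_i+x_i')$ only lies in $(\ker\mu_{\Lambda W})^1$, so each $u$-primitive one writes down must be multiplied by enough cocycles $(x_k-x_k')$ to restore $\ker\mu$-degree $n$ without spoiling the differential one is trying to hit. Second, the ``$u$-tails'' produced when $d$ is applied to these correction terms must cancel in pairs so that $d\gamma_1$ genuinely lands in $\Lambda(x_i)\otimes\Lambda(x_i')$ and equals $\Delta$ on the nose, not merely up to a coboundary. The oddness of $|y|$, $|u_i|$, $|u_i'|$ keeps the signs tractable, and the fact that here $m=1$, so there is only the single odd generator $y$ to carry through, is what makes the bookkeeping manageable. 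A one-line check that $\beta=\prod_i(x_i-x_i')\,y'-\gamma_0-\gamma_1$ lies in $(\ker\mu_{\Lambda W})^n$ — immediate, since each of its three summands does — then completes the proof.
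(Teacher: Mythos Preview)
Your high-level strategy matches the paper's exactly: set $\theta:=\prod_i(x_i-x_i')\,y'-\gamma_0$ (this is precisely the paper's $\theta$), compute $\Delta=d\theta$, and then look for a correction $\gamma_1$ in the ideal generated by the $u_i,u_i'$ with $d\gamma_1=\Delta$ and $\gamma_1\in(\ker\mu_{\Lambda W})^n$. The paper names this correction $\hat\theta$ and writes it down explicitly; your $\gamma$ is then $\hat\theta-\sum_l\tfrac12\pi_{\langle l\rangle}(y-y')$, in agreement with your $\gamma_0+\gamma_1$ decomposition.

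Where your proposal has a genuine gap is in the construction of $\gamma_1$. Your claim that after applying the halving identities the single non-exact piece $\tfrac14(x_p-x_p')^2(x_q-x_q')^2\prod_{k\neq p,q}(x_k-x_k')$ ``is designed to cancel'' against the contributions from $\tfrac12\sum_l\prod_{i\neq l}(x_i-x_i')(dy-dy')$ is not correct. If you actually expand the $l=p$ and $l=q$ summands you get leftover terms $\tfrac14(x_p+x_p')(x_q-x_q')^2\prod_{k\neq p,q}(x_k-x_k')$ and $\tfrac14(x_q+x_q')(x_p-x_p')^2\prod_{k\neq p,q}(x_k-x_k')$ (the $l\neq p,q$ summands are indeed coboundaries), and these do \emph{not} cancel against $\tfrac14(x_p-x_p')^2(x_q-x_q')^2\prod_{k\neq p,q}(x_k-x_k')$. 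In particular the remaining expression still contains terms like $\tfrac12 x_p'(x_q-x_q')^2\prod_{k\neq p,q}(x_k-x_k')$ which cannot be primitived by elements of the form $(u_r-u_r')\sigma$ alone: one needs primitives involving a bare $u_i'$, such as $u_i'(x_j-x_j')^2\pi_{\langle i,j\rangle}$. This is exactly what the paper's explicit $\hat\theta$ contains. So your framework is right, but the bookkeeping must be redone: either carry out the full expansion (allowing $u_i'$-terms, not only $u_i-u_i'$), or use the paper's ad hoc identity for $(x_i-x_i')(x_j-x_j')x_i'x_j'$, which is engineered so that two of its six pieces cancel against the $l=i,j$ contributions and the remaining four are visibly $d$ of elements in $(\ker\mu_{\Lambda W})^n$ involving $u_i',u_j',u_i-u_i',u_j-u_j'$.
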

\begin{proof}
	
	 In what follows we use the following notations
	\begin{itemize}
		\item[•] $\pi _{\langle 0 \rangle}= \prod \limits _{ k=1}^n(x_k-x'_k)$ .
		\item[•] $\pi_{\langle l \rangle}= \prod \limits_{ k\neq l}^n(x_k-x'_k)$, for all $l=1, \cdots , n$.
		\item[•] $\pi_{\langle i,j \rangle}= \prod \limits _{ k\neq i,j}^n(x_k-x'_k)$, for all $i,j=1, \cdots , n.$
		\item[•] $ \pi _{\langle i,j,l \rangle}= \prod \limits_{k\neq i,j,l}(x_k-x'_k)$, for all $i,j,l=1, \cdots , n.$ 
	\end{itemize}
Let consider the element $\theta \in (\ker \mu _{\Lambda W})^n$ given by 
$$ \theta = \pi _{\langle 0 \rangle} \cdot y' +\sum _{l=1}^n \frac{1}{2} \pi _{\langle l \rangle} \cdot (y-y').$$
Recall that $d y'$ is given by 
$$ d(y')= \sum _{i<j} \alpha_{i,j} x'_ix'_j, \quad \alpha_{i,j} \in \mathbb{Q}.$$
Then, we have 
\begin{eqnarray*}
	d\theta &=&  \pi _{\langle 0 \rangle} \cdot \sum _{i<j} \alpha_{i,j} x'_ix'_j+ \sum ^n _{l=1} \frac{1}{2} x'_l \cdot \pi _{\langle l \rangle}  \sum _{i<j} \alpha_{i,j} (x_ix_j-x'_ix'_j) \\
	&=& \sum _{i<j} \alpha_{i,j} \pi _{\langle i,j \rangle} (x_i-x'_i)(x_j-x'_j) x'_ix'_j + \sum _{i<j} \alpha_{i,j} \sum ^n _{l=1} \frac{1}{2} x'_l \cdot \pi _{\langle l \rangle}(x_ix_j-x'_ix'_j).   
\end{eqnarray*} 
Using the following identity in the first sum 
\begin{eqnarray*}
	(x_i-x'_i)(x_j-x'_j) x'_ix'_j &=& \frac{1}{2}x'_i(x_j-x'_j)(x'_ix'_j- x_ix_j) +\frac{1}{2}x'_j(x_i-x'_i)(x'_ix'_j- x_ix_j)\\
	&+&\frac{1}{2} x'_i(x_j^2-{x'_j}^2)(x_i-x'_i) + \frac{1}{2} x'_j(x_i^2-{x'_i}^2)(x_j-x'_j) \\
	&+& \frac{1}{2} {x'_i}^2(x_j-x'_j)^2 + \frac{1}{2} {x'_j}^2(x_i-x'_i)^2,
\end{eqnarray*}
we have 
\begin{eqnarray*}
	d \theta &=& \sum _{i<j} \alpha_{i,j} \pi _{\langle i,j \rangle}\,\{ \,
	\frac{1}{2}x'_i(x_j-x'_j)(x'_ix'_j- x_ix_j) +\frac{1}{2}x'_j(x_i-x'_i)(x'_ix'_j- x_ix_j)\\
	&+& \frac{1}{2} x'_i(x_j^2-{x'_j}^2)(x_i-x'_i) + \frac{1}{2} x'_j(x_i^2-{x'}_i^2)(x_j-x'_j) \\
	&+& \frac{1}{2} {x'_i}^2(x_j-x'_j)^2+ \frac{1}{2} {x'_j}^2(x_i-x'_i)^2\,
	\}\\
	&+&  \sum _{i<j} \alpha_{i,j} \frac{1}{2} x'_i\pi _{\langle i \rangle} (x_ix_j-x'_ix'_j) + \sum _{i<j} \alpha_{i,j} \frac{1}{2} x'_j \cdot \pi _{\langle j \rangle} (x_ix_j-x'_ix'_j)\\
	&+& \sum _{i<j} \alpha_{i,j} \sum _{l\neq i,j} \frac{1}{2} x'_l \cdot \pi _{\langle l \rangle}(x_ix_j-x'_ix'_j).  
\end{eqnarray*} 

After reduction we obtain 	
\begin{eqnarray} \label{eq} 
d(\theta)&=&  \sum _{i<j} \alpha_{i,j} \pi _{\langle i,j \rangle}\,\{\, \frac{1}{2} x'_i(x_j^2-{x'_j}^2)(x_i-x'_i) + \frac{1}{2} x'_j(x_i^2-{x'_i}^2)(x_j-x'_j) \nonumber\\
&+& \frac{1}{2} {x'_i}^2(x_j-x'_j)^2 
+ \frac{1}{2} {x'_j}^2(x_i-x'_i)^2
\, \} \nonumber\\
&+& \sum _{i<j} \alpha_{i,j} \sum _{l\neq i,j} \frac{1}{2} x'_l\pi _{\langle l \rangle} (x_ix_j-x'_ix'_j). 
\end{eqnarray}
For $l\neq i,j$ we can write $\pi _{\langle l \rangle}= \pi_{\langle i,j,l \rangle} \cdot (x_i-x_i')(x_j-x_j')$. Using the following decomposition
	\begin{eqnarray*}
	(x_i-x'_i)(x_j-x'_j)(x_ix_j-x'_ix'_j)&=&
	  \frac{1}{2}(x_i-x'_i)(x_j-x'_j)\{ (x_i-x'_i)(x_j+x'_j)+(x_j-x'_j)(x_i+ x'_i) \}\\
	&=&\frac{1}{2}(x_i-x'_i)^2(x_j^2-{x'_j}^2) +\frac{1}{2} (x_j-x'_j)^2(x_i^2-{x'_i}^2)
\end{eqnarray*}
in the last sum of (\ref{eq}) we obtain
\begin{eqnarray*}
d\theta &=&\sum _{i<j} \alpha_{i,j} \pi _{\langle i,j \rangle}\, \{ \,\frac{1}{2} x'_i(x_j^2-{x'_j}^2)(x_i-x'_i)+\frac{1}{2} x'_j(x_i^2-{x'_i}^2)(x_j-x'_j) \\
&+&\frac{1}{2} {x'_i}^2(x_j-x'_j)^2 +\frac{1}{2} {x'_j}^2(x_i-x'_i)^2
\,\}\\
&+& \sum _{i<j} \alpha_{i,j} \sum _{l\neq i,j} \frac{1}{2} x'_l\pi _{\langle i,j,l \rangle}\{  \frac{1}{2}(x_i-x'_i)^2(x_j^2-{x'_j}^2) +\frac{1}{2} (x_j-x'_j)^2(x_i^2-{x'_i}^2)\}.	
\end{eqnarray*}
Since $x_i^2=du_i$, we have $d(\theta)=d(\hat{\theta})$ where
\begin{eqnarray*}
	\hat{\theta}&=& \sum _{i<j} \alpha_{i,j} \pi _{\langle i,j \rangle} \,\{\,\frac{1}{2} x'_i(u_j-u'_j)(x_i-x'_i)+ \frac{1}{2} x'_j(u_i-u'_i)(x_j-x'_j)\\
	& +& \frac{1}{2} u'_i(x_j-x'_j)^2 + \frac{1}{2} u'_j(x_i-x'_i)^2 \,\}\\
	&+& \sum _{i<j} \alpha_{i,j} \sum _{l\neq i,j} \frac{1}{2} x'_l \cdot \pi _{\langle i,j,l \rangle}\{  \frac{1}{2}(x_i-x'_i)^2(u_j-u'_j) +\frac{1}{2} (x_j-x'_j)^2(u_i-u'_i)\}	
\end{eqnarray*}
Note that $\hat{\theta} \in (\ker \mu_{\Lambda W})^n$ and $(\varphi \otimes \varphi)(\hat{\theta})=0$. By setting $\gamma=\hat{\theta}- \sum _{l=1}^n \frac{1}{2} \pi _{\langle l \rangle} \cdot (y-y') $ and 
$\beta= \pi _{\langle 0 \rangle}\cdot y'-\gamma$ we have $\beta= \theta- \hat{\theta}\in (\ker \mu_{\Lambda W})^n$, $d(\beta)=0$ and remark that $\gamma$ satisfies $(\varphi \otimes \varphi)(\gamma)=-\sum \limits _{l=1}^n \frac{1}{2} \pi _{\langle l \rangle} \cdot (y-y')$.
\end{proof}
As we shall show in the following theorem, the elements $\gamma$ and $\beta$ are the main ingredients needed to determine $\TC(\Lambda W)$. 
\begin{theorem} \label{th5.3} For any elliptic extension $\Lambda W= \Lambda(x_i, u_i,y)$ we have $$\TC(\Lambda W)=\dim W$$
\end{theorem}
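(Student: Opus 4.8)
The plan is to prove the equality by establishing both inequalities, writing $\dim W=2n+1$ throughout. The upper bound is for free: $\Lambda W=\Lambda(x_i,u_i,y)$ is itself an elliptic extension of the type considered in Proposition~\ref{pr5.1}, attached to the pure coformal model $\Lambda(x_i,y)$ and the basis $\mathcal B=\{x_1,\dots,x_n\}$ of its even part, so Proposition~\ref{pr5.1} already gives $\TC(\Lambda W)\le\dim W$. (Concretely this is Theorem~\ref{th4.2} applied to the $F_0$-sub-model $\Lambda(x_i,u_i)$, which has $\cat=n$, together with $\chi_{\pi}(\Lambda W)=n-(n+1)=-1$, so $\TC(\Lambda W)\le 2n-(-1)=2n+1$.)

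For the reverse inequality the aim is to mimic the proof of Theorem~\ref{th5.1}: exhibit a cocycle in $(\ker\mu_{\Lambda W})^{2n+1}$ whose cohomology class does not vanish, which yields $\HTC(\Lambda W)\ge 2n+1$ and hence $\TC(\Lambda W)\ge\HTC(\Lambda W)\ge 2n+1$. The candidate is the product $\Omega\cdot\beta$, where $\Omega\in(\ker\mu_{\Lambda W})^{n+1}$ is the cocycle constructed in Section~\ref{class} (specialised to $m=1$) and $\beta=\pi_{\langle 0\rangle}\cdot y'-\gamma\in(\ker\mu_{\Lambda W})^{n}$ is the cocycle produced by Lemma~\ref{lemma5.4}. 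Being a product of cocycles it is a cocycle, and it lies in $(\ker\mu_{\Lambda W})^{n+1}\cdot(\ker\mu_{\Lambda W})^{n}\subseteq(\ker\mu_{\Lambda W})^{2n+1}$.

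To see that $[\Omega\cdot\beta]\neq 0$, I would push the class forward along the quasi-isomorphism $\varphi\otimes\varphi\colon\Lambda W\otimes\Lambda W'\to A\otimes A'$ and compute in $A\otimes A'$. There $(\varphi\otimes\varphi)(\Omega)=(-1)^n\pi_{\langle 0\rangle}(y-y')$, while $(\varphi\otimes\varphi)(\beta)=\pi_{\langle 0\rangle}\,y'+\frac{1}{2}\sum_{l=1}^n\pi_{\langle l\rangle}(y-y')$ by Lemma~\ref{lemma5.4}. In the product, every summand carrying two factors $(y-y')$ dies because $y$ has odd degree and so $(y-y')^2=0$; only $(-1)^n\pi_{\langle 0\rangle}^2(y-y')y'$ remains. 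Using $(y-y')y'=yy'$ and, since $x_i^2=0$ in $A$, the relation $(x_i-x_i')^2=-2x_ix_i'$, one gets $\pi_{\langle 0\rangle}^2=(-2)^n x_{[n]}x'_{[n]}$, whence $(\varphi\otimes\varphi)(\Omega\cdot\beta)=2^n\,x_{[n]}x'_{[n]}\,y\,y'=2^n\,\omega_A\cdot\omega'_A$ (up to sign). Since $\omega_A$ and $\omega'_A$ represent the fundamental classes of the Poincaré duality algebras $H(A)$ and $H(A')$, the class $[\omega_A]\cdot[\omega'_A]$ is non-zero in $H(A\otimes A')$ --- exactly the fact already used in the non-vanishing statement for $[\Omega]$ --- and as $\varphi\otimes\varphi$ is a quasi-isomorphism we conclude $[\Omega\cdot\beta]\neq 0$. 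This gives $\HTC(\Lambda W)\ge 2n+1$, hence $\TC(\Lambda W)\ge\dim W$, and with the upper bound, $\TC(\Lambda W)=\dim W$.

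The real obstacle is not in the argument above but in Lemma~\ref{lemma5.4}: producing the correcting cochain $\gamma$ (equivalently $\hat{\theta}$) that upgrades $\pi_{\langle 0\rangle}y'$ to an honest cocycle $\beta$ with the prescribed image under $\varphi\otimes\varphi$. Granting that lemma, the proof of the theorem reduces to the bounded sign computation sketched above and the observation that the outcome is a non-zero scalar multiple of the top class $\omega_A\cdot\omega'_A$.
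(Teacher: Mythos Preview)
Your proposal is correct and follows essentially the same route as the paper's proof: the upper bound comes from Proposition~\ref{pr5.1}, and the lower bound is obtained by showing that the cocycle $\Omega\cdot\beta\in(\ker\mu_{\Lambda W})^{2n+1}$ has non-zero class via the quasi-isomorphism $\varphi\otimes\varphi$, landing on a non-zero multiple of the top class $\omega_A\cdot\omega'_A$. Your computation is in fact slightly more explicit than the paper's in explaining why the $\pi_{\langle l\rangle}(y-y')$ contributions from $(\varphi\otimes\varphi)(\gamma)$ drop out (namely $(y-y')^2=0$), a step the paper passes over silently.
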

\begin{proof}
By Lemma \ref{lemma5.4} there exists $\gamma \in (\ker \mu_{\Lambda W})^n $ satisfying $$(\varphi \otimes \varphi)(\gamma)=-\sum _{l=1}^n \frac{1}{2} \pi _{\langle l \rangle} \cdot (y-y')$$
such that $\beta=\pi _{\langle 0\rangle} \cdot y'-\gamma \in (\ker \mu_{\Lambda W})^n$ is a cocycle. From the construction of $\Omega$ (see Section \ref{class}), we have $$(\varphi \otimes \varphi)(\Omega)=(-1)^n\prod_{k=1}^n (x_k-x'_k)(y-y')=(-1)^n\pi _{\langle 0 \rangle}(x_k-x'_k)(y-y').$$ 
It follows that
\begin{eqnarray*}
	[(\varphi \otimes \varphi)(\Omega \cdot \beta) ]&=& [(\varphi \otimes \varphi)(\Omega \cdot(\pi _{\langle 0 \rangle}  \cdot y'-\gamma))]\\
	&=&(-1)^n\prod_{k=1}^n (x_k-x'_k)(y-y') \cdot(\pi _{\langle 0 \rangle} \cdot y'- (\varphi \otimes \varphi)(\gamma)).
\end{eqnarray*}
Since $(\varphi \otimes \varphi)(\gamma)=-\sum \limits _{l=1}^n \frac{1}{2} \pi _{\langle l \rangle} \cdot (y-y')$ we deduce that 
\begin{eqnarray*}
		[(\varphi \otimes \varphi)(\Omega \cdot \beta) ]&=&
		(-1)^n[\prod_{k=1}^n (x_k-x'_k)(y-y') \cdot \prod _{k=1}^n(x_k-x'_k)y'+\sum _{l=1}^n \frac{1}{2} \pi _{\langle l \rangle} \cdot (y-y')]\\
		&=&(-1)^n[\prod_{k=1}^n (x_k-x'_k)(y-y') \cdot \prod _{k=1}^n(x_k-x'_k)y']\\
	&=&2^n [x_{[n]}yx'_{[n]}y'].
\end{eqnarray*}	
Recall that $x_{[n]}y$ and $x'_{[n]}y'$ represent respectively the fundamental classes of $A$ and $A'$ (see Section \ref{class}). Moreover, $x_{[n]}yx'_{[n]}y'$ represents the fundamental class of $A\otimes A'$. Then $\Omega \cdot \beta$ is a cocycle in $(\ker \mu _ {\Lambda W})^{2n+1}$, representing a non-zero cohomology class. As a result, we conclude that $\TC(\Lambda W) \geq 2n+1$. The other inequality ($\TC(\Lambda W)\leq 2n+1)$ follows from Proposition \ref{pr5.1} and finally  $\TC (\Lambda W)=\dim W= 2n+1$.
\end{proof} 
\subsubsection{A special case where $m>1$}
We now consider a model $\Lambda W= \Lambda(x_i,u_i,y_j)$ with $m>1$, and we assume that satisfies the following conditions
	\begin{equation} \label{eq4}
	\begin{cases} 
			dy_1 \in \Lambda (x_1, \cdots , x_{n-1})\\
			d y_j = x_n \cdot \underset{i}{\sum } \alpha^j _i x_i, \quad \alpha ^i _j \in \mathbb{Q} \text{ for all }  j \geq 2.
		\end{cases}
	\end{equation}   
We will see that $\TC(\Lambda W)= \dim W$. 

First we consider the subalgebra $\Lambda \tilde{W}=\Lambda(x_1,\cdots,x_{n-1},u_1,\cdots,u_{n-1},y)$ of $\Lambda W$. We have $\ker \mu _{\Lambda \tilde{W}} \subset \ker \mu _{\Lambda W}$. Applying Lemma \ref{lemma5.4} to $\Lambda \tilde{W}$, there exists $\gamma_1 \in (\ker \mu_{\Lambda \tilde{W}})^{n-1} \subset (\ker \mu_{\Lambda  W})^{n-1}$ such that $\beta_1=\prod \limits_{i=1}^{n-1}(x_i-x_i')y_1'-\gamma_1 \in (\ker \mu_{\Lambda W})^{n-1}$ is a cocycle and $(\varphi \otimes \varphi)(\gamma_1)= -\frac{1}{2} \sum \limits ^{n-1} _{l=1}  \prod \limits_{\stackrel{i=1}{i\neq l} }^{n-1}(x_i-x'_i)(y_1-y'_1).$

Secondly, it is clear that $x_n\cdot y_2 \cdots y_m$ is a cocycle in the algebra $A$. Since $\varphi$ is a quasi-isomorphism there exists $\varepsilon \in \ker \varphi$ such that $d(x_n \cdot y_2 \cdots y_m -\varepsilon )=0.$\\
 Finally considering the cocycle $\Omega \in (\ker \mu_{ \Lambda W})^{n+m}$ for the algebra $\Lambda W$, we construct the cocycle
	$$\alpha:= \Omega \cdot \beta_1 \cdot (x_n \cdot y_2 \cdots y_m -\varepsilon - x'_n \cdot y'_2 \cdots y'_m + \varepsilon ') \in (\ker \mu _{\Lambda W})^{2n+m}.$$
	A calculation shows that
	\begin{eqnarray*} \label{*}
		[(\varphi \otimes \varphi) (\alpha)]&=&[ \prod ^{n-1}_ {i=1} (x_i-x'_i)^2\cdot  \prod ^m_ {j=1} (y_j-y'_j) \cdot  (x_n-x_n')\cdot y'_1 \cdot (x_n y_2 \cdots y_m- x'_n y'_2 \cdots y'_m)]\\
		&=& \pm(-2)^{n-1} [\prod ^{n-1}_ {i=1}x_i x'_i \cdot (x_n-x'_n)\cdot y_1 y'_1 \prod _{j=2}^m(y_j -y'_j) \cdot (x_n  y_2 \cdots y_m- x'_n y'_2 \cdots y'_m)]\\
		&=& \pm(-2)^{n-1}[ x_{[n]} x_{[n]}'\cdot y_1 y'_1 \prod _{j=2}^m(y_j -y'_j) \cdot (y_2 \cdots y_m+y'_2 \cdots y'_m)]\\
		&=&\pm 2^n [x_{[n]}y_{[m]}\cdot x'_{[n]}y'_{[m]}]\\
		&\neq &0. 
	\end{eqnarray*}
	In fact, the last equality is gotten from
	\begin{eqnarray*}
		\prod_{j=2}^m(y_j-y'_j)(y_2 \cdots y_m+ y'_2 \cdots y'_m)&=& y_2\cdots y_m \cdot y'_2 \cdots y'_m +(-1)^{m-1} y'_2 \cdots y'_m \cdot y_2\cdots y_m\\
		&=&(1 +(-1)^{m-1}\cdot(-1)^{|y'_2 \cdots y'_m| |y_2\cdots y_m|} )y_2\cdots y_m \cdot y'_2 \cdots y'_m\\
		&=&(1 +(-1)^{m-1}\cdot(-1)^{(m-1)(m-1)})y_2\cdots y_m \cdot y'_2 \cdots y'_m \\
		&=&(1 +(-1)^{(m-1)m})y_2\cdots y_m \cdot y'_2 \cdots y'_m\\
		&=& 2\cdot y_2\cdots y_m \cdot y'_2 \cdots y'_m.
	\end{eqnarray*}
	As a result $2n+m \leq \TC(\Lambda W)$. Consequently, Proposition \ref{pr5.1} implies  $\TC(\Lambda W)=2n+m$.\\
	Let us finish with the following observation
	\begin{itemize}
	\item For  $n=2$: We can always suppose (through a change of variables) that there exists an $y_1$ such that $d(y_1)=0$. Then the conditions (\ref{eq4}) are satisfied and it follows that $\TC(\Lambda W)=m+4$.
 \item For $n=3$: If there exists an element $y_1$ such that $dy_1=\lambda x_1 x_2$ (with $\lambda \neq 0$) then we can suppose that $dy_j \in \Lambda ^+ (x_3)\otimes \Lambda(x_1,x_2)$ for all $j \geq 2$. Consequently, the conditions (\ref{eq4}) are satisfied and we have $\TC(\Lambda W)=m+6$.
\end{itemize}	
As a consequence of this observation and of the techniques used before, if $(\Lambda V,d)=(\Lambda(x_1,\cdots, x_n, y_1,\cdots, y_m),d)$ is an elliptic pure coformal model with either \begin{itemize}
\item $n=2$ and $m>2$\\
or 
\item $n=3$, $m>3$ and $dy_1=\alpha x_1^2 +\beta x_1x_2+ \gamma x_2^2$ where $\beta \neq 0$
\end{itemize} 
then $TC(\Lambda V) \geq \dim V$.
\section*{Acknowledgements}
This work has been partially supported by Portuguese Funds through FCT -- Funda\c c\~ao para a Ci\^encia e a Tecnologia, within the projects UIDB/00013/2020 and UIDP/00013/2020. A portion of this work has been discussed during the BIRS-CMO workshop \textit{Topological Complexity and Motion Planning}, Oaxaca (Mexico), May 2022, and S.H and L.V thank the \textit{Casa Matemática Oaxaca} for its support and hospitality during this workshop. S.H would like to thank the Moroccan center CNRST --Centre National pour la Recherche Scientifique et Technique for providing him with a research scholarship grant number: 7UMI2020.

\end{document}